\newtheorem{theorem}{Theorem}
\newtheorem{lemma}[theorem]{Lemma}
\newtheorem{conjecture}[theorem]{Conjecture}
\theoremstyle{remark}
\newtheorem{example}[theorem]{Example}
\newtheorem{definition}[theorem]{Definition}
\newcommand{\C}{\mathcal C}
\newcommand{\D}{\mathcal D}
\newcommand{\R}{\mathbb R}
\newcommand{\F}{\mathbb F}
\newcommand{\U}{\mathcal U}
\DeclareMathOperator{\CF}{CF}
\DeclareMathOperator{\code}{code}
\newcommand*{\eqdef}{\stackrel{\mbox{\normalfont\tiny def}}{=}}  
\newcommand{\ox}{, } 
\newcommand*\samethanks[1][\value{footnote}]{\footnotemark[#1]}
\title{Recognizing and Realizing Inductively Pierced Codes}
\author{
Ryan Curry
\thanks{Department of Mathematics, Colby College, Waterville, ME 04901, USA.}
\and
R. Amzi Jeffs
\thanks{Department of Mathematical Sciences, Carnegie Mellon University, Pittsburgh, PA 15213, USA\@. Supported by the National Science Foundation through Award No. 2103206.}
\and
Nora Youngs
\thanks{Department of Mathematics, Colby College, Waterville, ME 04901, USA. Supported by the Clare Boothe Luce Program and the Luce Foundation.}
\and
Ziyu Zhao
\samethanks[1]
 }
\date{\today}
\begin{document}

\maketitle

\begin{abstract}
We prove algebraic and combinatorial characterizations of the class of inductively pierced codes, resolving a conjecture of Gross, Obatake\ox and Youngs. 
Starting from an algebraic invariant of a code called its canonical form, we explain how to compute a piercing order in polynomial time, if one exists.
Given a piercing order of a code, we explain how to construct a realization of the code using a well-formed collection of open balls, and classify the minimal dimension in which such a realization exists.
\end{abstract}

\section{Introduction}

Given a collection $\U = \{U_1,\ldots, U_n\}$ of convex sets in $\R^d$, one may combinatorially record how these sets intersect and cover one another by computing the \emph{code} of the collection $\U$, defined as 
\begin{align*}
\code(\U) &\eqdef \Big\{\sigma\subseteq [n] \, \Big\vert\, \bigcap_{i\in\sigma} U_i \setminus \bigcup_{j\in[n]\setminus \sigma} U_j \neq \emptyset \Big\}\\
& =\, \{\sigma\subseteq [n]\, \vert\,  \text{there exists $p\in \R^d$ with $p\in U_i$ if and only if $i\in\sigma$}\}. 
\end{align*}
Above, $[n]\eqdef \{1,2,\ldots, n\}$. 
Observe that $\code(\U)$ is a subset of the Boolean lattice $2^{[n]}$. 
The collection $\U$ is called a \emph{realization} of $\code(\U)$, and the region $\bigcap_{i\in\sigma} U_i \setminus \bigcup_{j\in[n]\setminus \sigma} U_j$ is called the \emph{atom} of $\sigma$ in the realization~$\U$. 
Elements of a code are called \emph{codewords}. 
With this terminology, the codewords of $\code(\U)$ are exactly the subsets of $[n]$ whose atoms are nonempty in the realization $\U$. 

In 2013 Curto, Itskov, Veliz-Cuba\ox and Youngs \cite{civy} initiated the study of \emph{open convex codes}, which are the codes that have realizations consisting of convex open sets. 
Their work was motivated by the neuroscientific study of place cells, and for this reason it is typical to refer to the indices in $[n]$ as \emph{neurons}, and think of $\R^d$ as a stimulus space in which the various $U_i$ are the place fields where each neuron is active. 
Codes are a granular tool for capturing the intersection and covering patterns of a collection of sets, and so the study of codes that have realizations consisting of convex sets is of purely mathematical interest as well. 
Indeed, work in this direction has led to novel theorems in discrete geometry, interesting constructive results\ox and rich families of extremal examples \cite{cgik, local_obstructions, non_local_obstructions, undecidability, embedding_phenomena, order_forcing, rigid_structures, non_monotone, all_embedding_vectors}.  

\begin{example}
Consider the code $\C = \{123, 45, 12, 1,2,4,5,\emptyset\}$ on five neurons. 
Here we are eliminating brackets and commas in codewords to make our notation more concise---for example $123$ represents $\{1,2,3\}$.
We will adopt this convention elsewhere as well. 
Figure \ref{fig:first-example} shows a realization of $\C$ using open balls in the plane. 
\begin{figure}
    \centering
    \includegraphics{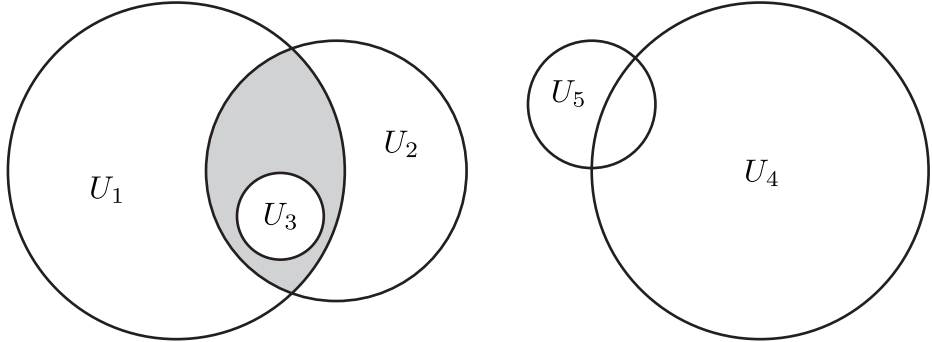}
    \caption{A realization of a code in the plane, with the atom of $12$ highlighted in gray.}
    \label{fig:first-example}
\end{figure}
\end{example}

Our work focuses exclusively on realizations consisting of convex open sets, and for this reason we use the more concise term \emph{convex code} in place of ``open convex code."
An efficient characterization of all convex codes is unfortunately out of the question; recent work by Lienkaemper, Kunin\ox and Rosen \cite{kunin_lienkaemper_rosen} indicates that recognizing whether or not a code is convex is NP-hard, even in $\R^2$. 
One way around this problem is to restrict to a simpler class of codes.
For example Cruz, Giusti, Itskov\ox and Kronholm \cite{cgik} gave a construction showing that every intersection complete code is convex.

In this vein, our work treats the class of \emph{$k$-inductively pierced} codes, which can be built up iteratively from certain intervals of codewords (see Definition \ref{def:inductively pierced} below).
We give an algebraic and geometric characterization of $k$-inductively pierced codes (Theorem \ref{thm:characterization}), proving a conjecture of Gross, Obatake\ox and Youngs \cite{visualization}. Furthermore, we exactly characterize the minimum dimension in which an inductively pierced code has a well-formed realization by open balls (Theorem \ref{thm:realization}).
Importantly, all of our characterizations are efficient in the following sense: given the canonical form of a code, we can compute a piercing order in polynomial time if one exists, and a piercing order allows us to quickly compute the smallest dimension in which a code can be realized by a well-formed collection of open balls.
This is discussed in Section \ref{sec:conclusion}. 
To present our results formally, we first require some additional background. 

\paragraph{Inductively Pierced Codes.}
For any $\sigma\subseteq \tau\subseteq [n]$, the \emph{interval} between $\sigma$ and $\tau$ is the set 
\[
[\sigma,\tau] \eqdef \{\gamma\subseteq [n]\mid \sigma\subseteq \gamma\subseteq \tau\}.
\]
The \emph{rank} of an interval $[\sigma,\tau]$ is equal to $|\tau\setminus \sigma|$. 
 Given a code $\C\subseteq 2^{[n]}$ and a neuron $i\in[n]$, we will often consider the code $\C\setminus i$, called the \emph{deletion} of $i$, 
which is obtained by deleting $i$ from every codeword in $\C$ where it appears.
For any $\sigma\subseteq [n]$, we let $\C\setminus \sigma$ denote the code obtained by successively deleting the neurons from~f$\sigma$.
Geometrically, if $\U = \{U_1,\ldots, U_n\}$ is a realization of $\C$ then deleting $U_i$ from this realization for all $i\in\sigma$ yields a realization of $\C\setminus \sigma$. 

Informally, a $k$-inductively pierced code is one which can be built up as a union of intervals with rank at most $k$ that fit together appropriately. 
\begin{definition}\label{def:k-piercing}\label{def:inductively pierced}
Let $\C\subseteq 2^{[n]}$ be a code and let $i\in[n]$ be a neuron.
We say that $i$ is an \emph{(abstract) $k$-piercing} of $\C$ if there exist $\sigma\subseteq \tau\subseteq [n]\setminus\{i\}$ so that
\begin{itemize}
    \item[(i)] $[\sigma,\tau]$ has rank $k$,
    \item[(ii)]$[\sigma,\tau]$ is contained in $\C\setminus i$, and 
    \item[(iii)] $\C = (\C\setminus i)\cup [\sigma\cup\{i\}, \tau\cup\{i\}]$.
\end{itemize}
We say that $[\sigma,\tau]$ is the interval \emph{associated to} the piercing. 
 A code $\C\subseteq 2^{[n]}$ is {\it $k$-inductively pierced} if $\C=\{\emptyset\}$, or there exists a neuron $i\in[n]$ which is a $k'$-piercing of $\C$ for some $k'\le k$, and $\C\setminus i$ is $k$-inductively pierced.
 A code is called simply \emph{inductively pierced} if it is $k$-inductively pierced for some value of $k$.
\end{definition}
 
From the definition, any $k$-inductively pierced code admits an ordering $i_1<i_2< \cdots < i_n$ of the neurons in~$[n]$ so that for each $j$, the neuron $i_j$ is a $k'$-piercing of $\C\setminus\{i_{j+1},\ldots,i_n\}$ for some $k'\le k$.
We call such an order a {\it $k$-piercing order}; note that there is no requirement that such an ordering be unique. 

Inductively pierced codes were previously studied by Gross, Obatake\ox and Youngs \cite{visualization}. 
Motivated by the study of Euler diagrams, their work focused on $0$-, $1$-\ox and $2$-inductively pierced codes and their possible realizations in the plane. 
We note that our definition of inductively pierced codes differs from theirs in notation, but is equivalent.
Our notation will allow us to more easily characterize inductively pierced codes, and describe their realizations in higher dimensions. 

\paragraph{The Neural Ideal.} 

Curto, Itskov, Veliz-Cuba\ox and Youngs \cite{civy} introduced an algebraic approach to understanding codes and their realizations.
Their approach used \emph{pseudo-monomials} in the polynomial ring $\F_2[x_1,\ldots, x_n]$, which are polynomials of the form $\prod_{i\in\sigma} x_i \prod_{j\in\tau} (1-x_j)$ where $\sigma$ and $\tau$ are disjoint subsets of $[n]$. 
It will be useful to partially order pseudo-monomials by divisibility, so that in particular we can pick out \emph{minimal} pseudo-monomials from a given set. 
Each $\sigma\subseteq [n]$ can be associated to an \emph{indicator pseudo-monomial}
\[
\rho_\sigma \eqdef \prod_{i\in\sigma} x_i \prod_{j\in[n]\setminus \sigma} (1-x_j).
\]
These indicators allow us to uniquely associate each code to an ideal in $\F_2[x_1,\ldots, x_n]$. 

\begin{definition}\label{def:neuralideal}
Given a code $\C\subseteq 2^{[n]}$, we define the {\it neural ideal} $J_\C\subseteq \F_2[x_1,\ldots, x_n]$ as follows: 
\[
J_\C \eqdef \langle \rho_\sigma \,|\, \sigma\notin\C\rangle.
\]
The {\it canonical form} of $J_\C$, denoted $\CF(J_\C)$, is the set of minimal pseudo-monomials in $J_\C$.
\end{definition}

Although the definition of $J_\C$ uses indicator pseudo-monomials, which have degree equal to $n$, the elements of the canonical form can in general have much lower degree. 
The pseudo-monomials in $J_\C$ have the following important geometric interpretation: $\prod_{i\in\sigma} x_i \prod_{j\in\tau} (1-x_j)$ is an element of $J_\C$ if and only if  $\bigcap_{i\in\sigma} U_i \subseteq \bigcup_{j\in \tau} U_j$ in every (possibly not convex or open) realization $\U = \{U_1,\ldots, U_n\}$ of $\C$. 
Thus $\CF(J_\C)$ can be regarded as a minimal description of the intersection and covering information in any realization of $\C$. 

Additionally, $J_\C$ provides a generalization of the well-studied Stanley-Reisner ideal \cite{miller_sturmfels, stanley} of a simplicial complex: if $\C$ is a simplicial complex then $\CF(J_\C)$ consists of monomials corresponding to minimal non-faces of $\C$, and $J_\C$ is the Stanely-Reisner ideal of $\C$ (see \cite[Section 4.4]{civy}). The ideal $J_\C$ and the canonical form are both unique to a particular code, and can be used to characterize codewords. Using a slight abuse of notation, we can evaluate polynomials in $J_\C$ on elements of $2^{[n]}$, where $f(\sigma)$ is computed by setting $x_i=1$ if $i\in \sigma$, and $x_i=0$ otherwise. We make use of the following useful facts: given $\sigma\subseteq [n]$, we have $\sigma\in \C$ if and only if $f(\sigma)=0$ for all $f\in J_\C$ and if and only if $f(\sigma)=0$ for all $f\in \CF(J_\C)$. 

\paragraph{Conventions.}
We are interested in using the canonical form to study inductively pierced codes and their realizations. 
To streamline our work, we will adopt several conventions, which we explain below.
\begin{itemize}
    \item[(1)] \textbf{Every code contains $\emptyset$ as a codeword.}
    This means that sets in a realization do not cover all of $\R^d$, and in particular we may restrict our attention to realizations consisting of bounded sets. 
    Algebraically, this means that $\prod_{i\in[n]}(1-x_i)$ is not in $J_\C$, and hence $\CF(J_\C)$ does not contain any pseudo-monomial that is entirely a product of $(1-x_i)$ terms. 
    \item[(2)] \textbf{Every neuron appears in some codeword.} 
    This means that no set in a realization is empty.
    Algebraically, this means that $x_i$ is not in $J_\C$ for any $i\in[n]$. 
    \item[(3)] \textbf{No two neurons appear in exactly the same codewords.}
    This means that no two sets in a realization are equal. 
    Note that if two neurons did have identical behavior, deleting one does not change whether or not a code is realizable, and so this assumption does not affect our analysis. 
    Algebraically, this means that for any $i\neq j$ we do not have both $x_i(1-x_j)$ and $x_j(1-x_i)$ in $J_\C$. 
\end{itemize}
Note that every code $\C\subseteq 2^{[n]}$ can be associated to a code $\D$ satisfying (2) and (3) above, so that $\D$ is convex if and only if $\C$ is convex.
Indeed, up to permutation, any neurons that do not appear can be forgotten, and if two neurons have identical behavior one of them can be discarded. 

\paragraph{Characterizing Inductively Pierced Codes.} 

Note that our conventions (1) and (2) imply that every pseudo-monomial in $\CF(J_\C)$ has degree at least two. 
We say that a code $\C$ is \emph{degree two} if every pseudo-monomial in $\CF(J_\C)$ has degree exactly two. 
Gross, Obatake\ox and Youngs \cite{visualization} showed that under the conventions (1)--(3), every $2$-inductively pierced code is degree two. 
Their techniques imply the more general result that any inductively pierced code is degree two. 
We will postpone the proof of this and our other results until Section \ref{sec:characterization}. 

\begin{restatable}{proposition}{degreetwoprop}\label{prop:CFdegreetwo} If $\C\subseteq 2^{[n]}$ is inductively pierced, then $\C$ is degree two---that is, all elements of $\CF(J_\C)$ have degree two. 
\end{restatable}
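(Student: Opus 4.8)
The plan is to prove the statement by induction on $n$, the number of neurons, using the fact that an inductively pierced code admits a $k$-piercing order. The base case $\C = \{\emptyset\}$ is trivial since $\CF(J_\C) = \emptyset$. For the inductive step, suppose $i$ is a $k'$-piercing of $\C$ with associated interval $[\sigma,\tau]$, so that $\C = (\C\setminus i) \cup [\sigma\cup\{i\}, \tau\cup\{i\}]$, and assume (after relabeling) that $\C\setminus i$ is inductively pierced, hence degree two by the inductive hypothesis. I would then analyze how $\CF(J_\C)$ relates to $\CF(J_{\C\setminus i})$, and argue that adding the piercing neuron $i$ introduces only degree-two pseudo-monomials into the canonical form, and does not create any higher-degree ones.

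First I would set up the combinatorial description of $\C$ directly: a set $\gamma\subseteq [n]$ is a non-codeword of $\C$ precisely when either (a) $i\notin\gamma$ and $\gamma\notin\C\setminus i$, or (b) $i\in\gamma$ and $\gamma\setminus\{i\}\notin[\sigma,\tau]$. From this I would identify the new pseudo-monomials forced into $J_\C$. The key observation should be that the piercing condition (iii), together with $[\sigma,\tau]\subseteq\C\setminus i$, pins down exactly which codewords of $\C\setminus i$ ``gain'' a copy with $i$ added: namely those in $[\sigma,\tau]$. This should translate into canonical-form generators that record, for each neuron $j$, whether $x_i$ implies $x_j$ (when $j\in\sigma$) or $x_i$ implies $1-x_j$ (when $j\notin\tau$), i.e. pseudo-monomials $x_ix_j$ for $j\in\tau\setminus\sigma$ are \emph{not} present, while $x_i(1-x_j)$ for $j\notin\tau$ and $x_i \cdot x_j$-type relations for... — more precisely, the degree-two relations involving $i$ are $x_i(1-x_j)$ for $j\notin\tau$ and $(1-x_i)(\text{something})$? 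I would need to be careful here, but the upshot is that all such relations have degree two.

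The heart of the argument is a lemma of the following shape: if $p$ is a pseudo-monomial in $J_\C$ that genuinely involves $x_i$ or $(1-x_i)$ (i.e. is not already in $J_{\C\setminus i}$ after ignoring $i$), then $p$ is divisible by a degree-two pseudo-monomial in $J_\C$; and if $p$ does not involve $i$ at all, then $p\in J_{\C\setminus i}$, so by the inductive hypothesis it is divisible by a degree-two element of $\CF(J_{\C\setminus i})$, and I would need that this element is still in $J_\C$ (which follows because deletion of a neuron cannot destroy a covering relation among the remaining neurons — concretely, $J_{\C\setminus i}$ pulls back into $J_\C$). Combining these, every minimal pseudo-monomial of $J_\C$ has degree exactly two (degree at least two coming from conventions (1) and (2), as already noted in the excerpt).

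The main obstacle I anticipate is the bookkeeping around how the canonical form changes under the piercing operation — in particular, showing that no ``long'' generator of $\CF(J_{\C\setminus i})$ gets ``lengthened'' by the reintroduction of $i$ in a way that is not dominated by a new degree-two relation, and conversely that genuinely new high-degree obstructions cannot appear. The piercing axioms are exactly what rule this out: because the added codewords form a full interval $[\sigma\cup\{i\},\tau\cup\{i\}]$ and this interval already appears (minus $i$) inside $\C\setminus i$, the neuron $i$ behaves like a ``copy'' of the region $[\sigma,\tau]$, so every constraint on $i$ is a simple pairwise constraint with a single other neuron (those in $\sigma$, forcing containment; those outside $\tau$, forcing disjointness). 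Making this precise — ideally by explicitly computing $\CF(J_\C)$ in terms of $\CF(J_{\C\setminus i})$, $\sigma$, and $\tau$ — is the step I would spend the most care on, and it is likely where the paper's own proof does the real work.
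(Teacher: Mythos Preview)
Your proposal is correct and follows essentially the same route as the paper: induction on $n$, with the inductive step reduced to an explicit computation of $\CF(J_\C)$ in terms of $\CF(J_{\C\setminus i})$, $\sigma$, and $\tau$ (the paper states this as a standalone lemma, obtaining $\CF(J_\C) = \CF(J_{\C\setminus i})\sqcup\{x_ix_j\mid j\notin\tau\}\sqcup\{x_i(1-x_j)\mid j\in\sigma\}$, exactly as you anticipate in your final paragraph). One small slip to fix: midway through you swap the two families of degree-two relations --- disjointness with $j\notin\tau$ gives $x_ix_j$, while containment in $j\in\sigma$ gives $x_i(1-x_j)$ --- though you have it right again by the end.
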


Following \cite{visualization}, we associate each degree two code $\C\subseteq 2^{[n]}$ to a graph $G(\C)$ called the \emph{general relationship graph} of $\C$, which has vertex set $[n]$ and an edge $ij$ whenever $\CF(J_\C)$ does not contain any pseudo-monomial whose two variables are $x_i$ and $x_j$.
Geometrically, $ij$ is an edge in $G(\C)$ if and only if $U_i$ and $U_j$ intersect and neither contains the other in every realization of $\C$. 
It turns out that the general relationship graph determines whether or not $\C$ is inductively pierced. 

Below, recall that a graph is \emph{chordal} if it can be built up by successively adding \emph{simplicial vertices}, whose neighborhood among previously added vertices is a clique. The reverse of the order in which these vertices are added is called a \emph{perfect elimination order}. 

\begin{restatable}{theorem}{characterizationthm}\label{thm:characterization}
A code $\C\subseteq 2^{[n]}$ is inductively pierced if and only if $\C$ is degree two and $G(\C)$ is chordal. 
If $k$ is the smallest integer so that $\C$ is $k$-inductively pierced, then the largest clique in $G(\C)$ has size $k+1$.
\end{restatable}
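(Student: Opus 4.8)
The plan is to prove the biconditional in both directions by carefully relating the operation of deleting a $k$-piercing to a modification of the general relationship graph. In one direction, suppose $\C$ is inductively pierced, with $k$-piercing order $i_1 < \cdots < i_n$. By Proposition \ref{prop:CFdegreetwo}, $\C$ is degree two, so $G(\C)$ is defined; I would argue by induction on $n$ that $i_n, i_{n-1}, \ldots, i_1$ is a perfect elimination order of $G(\C)$, which is exactly the statement that $G(\C)$ is chordal. The crucial computation is to understand how the canonical form — and hence $G(\C)$ — changes when we delete the final piercing $i := i_n$, whose associated interval is $[\sigma,\tau]$. I expect that $\CF(J_{\C\setminus i})$ is obtained from $\CF(J_\C)$ in a controlled way, and moreover that in $G(\C)$ the vertex $i$ is simplicial with neighborhood precisely $\tau$: condition (iii) forces every codeword containing $i$ to lie in $[\sigma\cup\{i\},\tau\cup\{i\}]$, which should mean that for $j\notin\tau$ the pseudo-monomial $x_i x_j$ (or $x_i(1-x_j)$) lies in $J_\C$, i.e.\ $ij$ is a non-edge, while for $j,j'\in\tau$ the codewords realizing the full interval witness that $ij, ij', jj'$ are all edges, making $N(i) = \tau$ a clique.

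For the converse, suppose $\C$ is degree two and $G(\C)$ is chordal. Let $i$ be a simplicial vertex of $G(\C)$ whose neighborhood $N(i)$ is a clique of size at most $k$ (taking $k+1$ to be the clique number). The goal is to show $i$ is an abstract $k'$-piercing of $\C$ for $k' = |N(i)| \le k$, and that $\C\setminus i$ is again degree two with chordal general relationship graph (so induction finishes the argument). The natural candidate interval is $\tau := N(i)$ and $\sigma := N(i) \setminus \{j : x_i x_j \notin \CF(J_\C) \text{ is ``forced in'' somehow}\}$ — more precisely, I would read off from $\CF(J_\C)$ which codewords contain $i$: the degree-two canonical form, together with the fact that $j\notin N(i)$ means some pseudo-monomial in $x_i, x_j$ is in $\CF$, should pin down that the codewords of $\C$ containing $i$ are exactly an interval $[\sigma\cup\{i\},\tau\cup\{i\}]$ with $\tau = N(i)$. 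I then need to check (i) rank $k' = |N(i)|$, (ii) $[\sigma,\tau]\subseteq\C\setminus i$, and (iii) $\C = (\C\setminus i)\cup[\sigma\cup\{i\},\tau\cup\{i\}]$, and finally that deleting $i$ preserves both the degree-two property and chordality of the relationship graph — the latter because deleting a simplicial vertex from a chordal graph leaves a chordal graph, but I must confirm $G(\C\setminus i)$ really is $G(\C) - i$ and not something larger.

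The clique-size claim then follows by tracking the maximum over the induction: each deletion step removes a piercing of rank equal to the size of a clique in $G$, so the minimal $k$ for which $\C$ is $k$-inductively pierced equals the maximum rank used in an optimal piercing order, which equals $\omega(G(\C)) - 1$ by the perfect-elimination-order correspondence (a perfect elimination order can always be chosen to eliminate a vertex whose closed neighborhood realizes the clique number, and no piercing order can do better since any $k$-piercing order exhibits $G(\C)$ as built from simplicial vertices of degree $\le k$).

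The main obstacle I anticipate is the bookkeeping lemma at the heart of both directions: precisely describing how $\CF(J_\C)$ transforms under deletion of a $k$-piercing, and conversely recovering the interval $[\sigma,\tau]$ of a piercing purely from the degree-two canonical form of $\C$ around a simplicial vertex $i$. In particular, I need to be careful that a simplicial vertex of $G(\C)$ whose neighborhood is a clique automatically yields a genuine interval of codewords (condition (iii)) — this is where the degree-two hypothesis is essential, since degree-two canonical forms impose no constraints involving three or more neurons, so the codewords containing $i$ are ``as large as possible'' subject to the pairwise constraints, and those pairwise constraints are exactly encoded by edges at $i$. Verifying that this heuristic is correct, and handling the $\sigma$ versus $\tau$ distinction (i.e.\ which neighbors of $i$ are forced to co-occur with $i$ versus merely allowed to), is the delicate part; everything else is a routine induction.
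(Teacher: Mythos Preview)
Your converse direction has a genuine gap: not every simplicial vertex of $G(\C)$ is a piercing, and the missing obstruction is precisely the containment information that $G(\C)$ throws away. Consider $\C = \{12, 2, 3, \emptyset\}$, which is degree two with $\CF(J_\C) = \{x_1(1-x_2),\, x_1x_3,\, x_2x_3\}$; here $G(\C)$ has no edges, so every vertex is simplicial, yet $2$ is \emph{not} a piercing (deleting $2$ produces the codeword $1 \notin \C$, so condition (iii) fails). The neuron $1$ \emph{is} a piercing, and the difference is that $1$ sits below $2$ in the containment order while $2$ does not. The paper handles this by introducing a second structure, the poset $P(\C)$ where $i<j$ iff $x_i(1-x_j)\in\CF(J_\C)$, and proving (Proposition~\ref{prop:elimination-neuron}) that $i$ is a $k$-piercing exactly when $i$ is simultaneously simplicial in $G(\C)$ and minimal in $P(\C)$. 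The nontrivial step you are missing is then to show that such a vertex exists whenever $G(\C)$ is chordal: the paper takes a simplicial vertex $i$ that is $P(\C)$-minimal \emph{among simplicial vertices}, and argues via Lemma~\ref{lem:neighborhoods} and the fact that a non-clique chordal graph has two non-adjacent simplicial vertices that $i$ is in fact $P(\C)$-minimal outright.

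A related error appears already in your forward direction: you claim $N(i) = \tau$, but in fact for $j\in\sigma$ one has $x_i(1-x_j)\in\CF(J_\C)$, so $ij$ is a non-edge of $G(\C)$ and the correct identity is $N(i) = \tau\setminus\sigma$. Thus the ``forced to co-occur'' neurons you mention are not neighbors of $i$ at all; they live entirely in $P(\C)$, and the interval associated to the piercing is $\sigma = \{j : i<j \text{ in } P(\C)\}$, $\tau = \sigma \cup N(i)$. Once you track both $G(\C)$ and $P(\C)$ the bookkeeping you anticipate does go through, but the existence argument above is the real content of the converse and is absent from your plan.
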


\paragraph{Computing Piercing Orders.} 
Theorem \ref{thm:characterization} gives a positive resolution to a conjecture of Gross, Obatake\ox and Youngs \cite{visualization}.
However, we would like to not only characterize inductively pierced codes, but also find a way to recognize them efficiently. 
To this end, we introduce a second auxiliary object associated to a degree two code.
Given a degree two code $\C$, let $P(\C)$ be the  partially ordered set whose elements are $[n]$ and which has comparability relation defined by 
\[
\text{$i < j$ in $P(\C)$ if and only if $x_i(1-x_j)\in \CF(J_\C)$}.
\]
Recall that $\CF(J_\C)$ cannot contain both $x_i(1-x_j)$ and $x_j(1-x_i)$ by convention (3), so this relation is antisymmetric. 
Moreover, if we have $i < j < k$ in $P(\C)$, then $x_i(1-x_j)$ and $x_j(1-x_k)$ both lie in $J_\C$, and so $x_i(1-x_k) = \big(x_i(1-x_j)\big)(1-x_k) + x_i\big(x_j(1-x_k)\big)$ lies in $J_\C$. 
In fact, $x_i(1-x_k)$ lies in $\CF(J_\C)$ since $\C$ is degree two, hence this relation is transitive, and defines a partial order.
As previously noted in the discussion on the neural ideal, we have $x_i(1-x_j)$ in $J_\C$ if and only if $U_i\subseteq U_j$ in every realization of $\C$, so $P(\C)$ can be regarded as capturing pairwise containments of sets in any realization of $\C$. 

We will use $P(\C)$ and $G(\C)$ to identify piercings of $\C$, and thus find piercing orders. 
It turns out that piercings of $\C$ correspond to elimination neurons, defined below. 
\begin{definition}\label{def:elimination-neuron}
Let $\C\subseteq 2^{[n]}$ be a degree two code.
We say that a neuron $i\in [n]$ is an \emph{elimination neuron} if $i$ is a simplicial vertex in $G(\C)$ and a minimal element in $P(\C)$. 
\end{definition}

\begin{restatable}{proposition}{elimneuronprop}\label{prop:elimination-neuron}
Let $\C\subseteq 2^{[n]}$ be a degree two code. 
Then a neuron $i\in [n]$ is a $k$-piercing of $\C$ if and only if $i$ is an elimination neuron with $k$ neighbors in $G(\C)$. 
\end{restatable}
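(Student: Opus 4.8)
The plan is to prove both implications by translating the combinatorial notion of a $k$-piercing into precise statements about which degree-two pseudo-monomials lie in $\CF(J_\C)$. Throughout I use the facts recalled after Definition \ref{def:neuralideal}: $\sigma\in\C$ iff every element of $\CF(J_\C)$ vanishes at $\sigma$; and, since $\C$ is degree two and Conventions (1)--(2) rule out degree-one elements, a degree-two pseudo-monomial lies in $\CF(J_\C)$ as soon as it lies in $J_\C$. So $\CF(J_\C)$ is completely determined by which of $x_ix_j$, $x_i(1-x_j)$, $x_j(1-x_i)$ lie in $J_\C$ for each pair $i\neq j$.

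\emph{Forward direction.} Suppose $i$ is a $k$-piercing with associated interval $[\sigma,\tau]$. Condition (iii) immediately gives $\C\setminus i\subseteq\C$, that the codewords of $\C$ containing $i$ are exactly $\{\delta\cup\{i\}:\delta\in[\sigma,\tau]\}$, and that those avoiding $i$ are exactly $\C\setminus i$. Checking against this codeword list yields, for each $j\neq i$: $x_ix_j\in J_\C\iff j\notin\tau$; $x_i(1-x_j)\in J_\C\iff j\in\sigma$; and $x_j(1-x_i)\notin J_\C$ (using Convention (2) together with condition (ii)). Hence the pseudo-monomials of $\CF(J_\C)$ in the variables $x_i,x_j$ are as described, and three consequences follow: the neighborhood of $i$ in $G(\C)$ is exactly $\tau\setminus\sigma$, so $i$ has $k=|\tau\setminus\sigma|$ neighbors; $i$ is minimal in $P(\C)$ because no $x_j(1-x_i)$ lies in $\CF(J_\C)$; and $\tau\setminus\sigma$ is a clique in $G(\C)$, since $\tau\in\C$ and $\sigma\cup\{j\}\in\C$ for every $j\in\tau$ (both using (ii) and $\C\setminus i\subseteq\C$) witness that no pseudo-monomial in two variables drawn from $\tau\setminus\sigma$ lies in $J_\C$. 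Thus $i$ is an elimination neuron with $k$ neighbors.

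\emph{Converse.} Suppose $i$ is an elimination neuron whose neighborhood $N$ in $G(\C)$ has size $k$. Guided by the computation above, set $\sigma:=\{b:x_i(1-x_b)\in\CF(J_\C)\}$ and $\tau:=\sigma\cup N$. I first check that $\{b:x_i(1-x_b)\in\CF(J_\C)\}$, $N$, and $\{a:x_ix_a\in\CF(J_\C)\}$ partition $[n]\setminus\{i\}$: the first and third are disjoint since otherwise $x_i=x_i(1-x_b)+x_ix_b\in J_\C$, contradicting Convention (2); and any other $j$ lies in $N$ because minimality of $i$ in $P(\C)$ excludes $x_j(1-x_i)\in\CF(J_\C)$ and Convention (1) excludes $(1-x_i)(1-x_j)$. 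So $[\sigma,\tau]$ has rank $k$, giving (i). Evaluating the members $x_i(1-x_b)$ ($b\in\sigma$) and $x_ix_a$ ($a\notin\tau$) of $\CF(J_\C)$ on an arbitrary $\gamma\in\C$ with $i\in\gamma$ forces $\sigma\subseteq\gamma\setminus\{i\}\subseteq\tau$; this yields the $\subseteq$ inclusion in (iii), and (combined with the next step) the $\supseteq$ inclusion, $\C\setminus i\subseteq\C$, and (ii).

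The heart of the converse---and the step I expect to be the main obstacle---is proving $[\sigma,\tau]\subseteq\C$, equivalently that every $f\in\CF(J_\C)$ vanishes at each $\gamma$ with $\sigma\subseteq\gamma\subseteq\tau$. The pseudo-monomials $f$ involving $x_i$ (namely $x_ix_a$ with $a\notin\tau$, or $x_i(1-x_b)$ with $b\in\sigma$) vanish for trivial reasons. For $f=x_px_q$ or $f=x_p(1-x_q)$ with $p,q\neq i$, I argue by contradiction: if $f(\gamma)\neq 0$, then $p$---and then, by an analogous argument handling $q$---must lie in $\tau=\sigma\sqcup N$; to exclude $p$ or $q$ from $\sigma$ I apply $\F_2$-linear identities such as $x_ix_q=x_q\cdot x_i(1-x_p)+x_i\cdot x_px_q\in J_\C$ to deduce a membership forbidden by the partition, which forces $p,q\in N$. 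But $N$ is a clique because $i$ is simplicial, so $pq$ is an edge of $G(\C)$, contradicting $f\in\CF(J_\C)$. This is exactly where the ``simplicial vertex'' hypothesis is used. Once $[\sigma,\tau]\subseteq\C$ is in hand, conditions (ii) and (iii) follow by combining it with the codeword description above, so $i$ is a $k$-piercing with associated interval $[\sigma,\tau]$.
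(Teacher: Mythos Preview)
Your proposal is correct and follows essentially the same route as the paper: in both directions you identify $\sigma$ and $\tau$ via the degree-two pseudo-monomials involving $x_i$, and in the converse you verify $[\sigma,\tau]\subseteq\C$ by a case analysis on $\CF(J_\C)$ using the same $\F_2$-linear identities, with the simplicial hypothesis entering exactly where you indicate. The only cosmetic difference is that the paper packages the forward-direction computation of which pseudo-monomials involve $x_i$ into a separate lemma (their Lemma~\ref{lem:CF-pierced}), whereas you derive it directly from the codeword list; your choice $\sigma=\{b:x_i(1-x_b)\in\CF(J_\C)\}$ is the correct one and agrees with what that lemma produces.
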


  Note that Proposition \ref{prop:elimination-neuron} already provides a novel characterization of piercing orders: they are the linear orders on $[n]$ whose reverse is simultaneously a perfect elimination order of $G(\C)$ and a linear extension of $P(\C)$. 

\begin{restatable}{proposition}{greedyprop}\label{prop:greedy}
Let $\C\subseteq 2^{[n]}$ be a code, and suppose that a neuron $i\in[n]$ is a piercing of $\C$. Then $\C$ is inductively pierced if and only if $\C\setminus i$ is inductively pierced. 
\end{restatable}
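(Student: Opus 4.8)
The plan is to handle the two directions separately. The implication ``$\C\setminus i$ inductively pierced $\Rightarrow$ $\C$ inductively pierced'' is immediate from Definition \ref{def:inductively pierced}, since $i$ is assumed to be a piercing of $\C$. All of the content lies in the converse: assuming $\C$ is inductively pierced and that $i$ is \emph{some} (arbitrary) piercing of $\C$ — not necessarily one appearing in a piercing order — show that $\C\setminus i$ is inductively pierced. The key tool I would use is a lemma describing deletions of degree two codes: if $\C$ is degree two and $j\in[n]$, then $\CF(J_{\C\setminus j})$ equals the set of pseudo-monomials in $\CF(J_\C)$ that do not involve the variable $x_j$; consequently $\C\setminus j$ is again degree two, the graph $G(\C\setminus j)$ is the induced subgraph $G(\C)\setminus j$, and the poset $P(\C\setminus j)$ is the restriction of $P(\C)$ to $[n]\setminus\{j\}$. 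I expect this lemma to be the main thing requiring an argument, although it is short: a pseudo-monomial not involving $x_j$ lies in $J_{\C\setminus j}$ if and only if it lies in $J_\C$ (evaluate on codewords, using that $\gamma\mapsto\gamma\setminus j$ surjects $\C$ onto $\C\setminus j$), and minimality passes in both directions because no divisor of such a pseudo-monomial can involve $x_j$. This lemma is in any case needed to justify the remark following Proposition \ref{prop:elimination-neuron}, so I would isolate and prove it first.

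Given the lemma, the converse is quick. By Proposition \ref{prop:CFdegreetwo}, $\C$ inductively pierced implies $\C$ is degree two, and by Theorem \ref{thm:characterization} it also implies $G(\C)$ is chordal. Since $i$ is a piercing, Proposition \ref{prop:elimination-neuron} tells us that $i$ is a simplicial vertex of $G(\C)$. The deletion lemma then gives that $\C\setminus i$ is degree two and that $G(\C\setminus i)=G(\C)\setminus i$ is an induced subgraph of a chordal graph, hence chordal. Applying Theorem \ref{thm:characterization} in the reverse direction yields that $\C\setminus i$ is inductively pierced. The one point to double-check is that this does not create a circular dependency: the argument is legitimate only if the proof of Theorem \ref{thm:characterization} does not itself invoke Proposition \ref{prop:greedy}.

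If circularity is a concern, I would instead prove the converse by induction on $n$, via a ``commuting'' property of piercings: if $i\neq j$ are both piercings of a degree two code $\C$, then $i$ is a piercing of $\C\setminus j$ and, symmetrically, $j$ is a piercing of $\C\setminus i$. This follows from Proposition \ref{prop:elimination-neuron} together with the deletion lemma: $i$ is simplicial in $G(\C)$ and minimal in $P(\C)$, and deleting the vertex $j$ leaves $i$ simplicial in $G(\C\setminus j)=G(\C)\setminus j$ (the neighborhood of $i$ only shrinks, and an induced subgraph of a clique is a clique) and minimal in $P(\C\setminus j)$ (deleting an element cannot put anything below $i$); since $\C\setminus j$ is degree two, Proposition \ref{prop:elimination-neuron} applies to it. With this property in hand, $\C$ inductively pierced and $\C\neq\{\emptyset\}$ (which holds since $i$ is a piercing) gives, via Definition \ref{def:inductively pierced}, a piercing $j$ of $\C$ with $\C\setminus j$ inductively pierced. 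If $i=j$ we are done; otherwise $i$ is a piercing of $\C\setminus j$, which has one fewer neuron, so the inductive hypothesis shows $(\C\setminus j)\setminus i$ is inductively pierced. Since deletions commute at the level of codes, $(\C\setminus j)\setminus i=(\C\setminus i)\setminus j$, and since $j$ is a piercing of $\C\setminus i$, one more application of Definition \ref{def:inductively pierced} shows that $\C\setminus i$ is inductively pierced. The main obstacle in either route is the deletion lemma; the main subtlety is the circularity check described above.
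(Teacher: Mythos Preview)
Your primary route is essentially the paper's own proof: the deletion lemma you isolate is exactly Lemma~\ref{lem:deletion}, and the paper likewise reduces the forward direction to Theorem~\ref{thm:characterization} by observing that $\CF(J_{\C\setminus i})\subseteq\CF(J_\C)$ (so $\C\setminus i$ is degree two) and that $G(\C\setminus i)$ is an induced subgraph of the chordal graph $G(\C)$. Your circularity check passes---the proof of Theorem~\ref{thm:characterization} does not invoke Proposition~\ref{prop:greedy}---so your alternative inductive route via commuting piercings, while correct, is not needed (and the step where you invoke Proposition~\ref{prop:elimination-neuron} to say $i$ is simplicial is harmless but superfluous, since any induced subgraph of a chordal graph is chordal).
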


Propositions \ref{prop:elimination-neuron} and \ref{prop:greedy} will let us argue that not only can we find piercings efficiently, but also that we will also never ``get stuck" when trying to build a piercing order.
In other words, a piercing order may be computed greedily.
For a full discussion of this algorithm, see Section \ref{sec:conclusion}. 
For now, we summarize our realization results. 

\paragraph{Realizing Inductively Pierced Codes.} 
Since inductively pierced codes have a significant amount of combinatorial structure, it is reasonable to hope that they admit structured realizations.
Rather than seek realizations by arbitrary convex open sets, we seek realizations by open balls. 
This can be achieved from any piercing order, and we can easily characterize the dimension in which such realizations exist. 
Below we make the additional requirement that our realizations are well-formed, in the sense that the boundaries of sets intersect generically.

\begin{definition}\label{def:well-formed}
A collection $\mathcal S = \{S_1, S_2, \ldots, S_n\}$ of $(d-1)$-dimensional spheres in $\R^d$ is called \emph{well-formed} if for every $k\le d$ the intersection of $k$-many spheres from $\mathcal S$ is either empty, or a $(d-k)$-dimensional sphere, and the intersection of $(d+1)$-many spheres from $\mathcal S$ is empty. A collection of open balls is called well-formed if their boundaries comprise a well-formed collection of spheres. 
\end{definition}

The above definition of well-formed spheres mirrors the definition of well-formed curves used by Gross, Obatake\ox and Youngs \cite{visualization} and the literature on Euler diagrams. 
Indeed, a collection of well-formed circles in $\R^2$ is a special case of a collection of well-formed curves. 
To characterize the exact dimensions in which an inductively pierced code admits a well-formed realization by open balls, we require two combinatorial definitions. 

\begin{definition}\label{def:attaching-set}
Let $\C\subseteq 2^{[n]}$ be a degree two code, and suppose that the largest clique in $G(\C)$ has size $k+1$. 
For every $\sigma\subseteq [n]$ of size $k$ on which $G(\C)$ is a clique, the \emph{attaching set} of $\sigma$ consists of all $i\in[n]\setminus\sigma$ whose neighborhood contains $\sigma$.
\end{definition}

In other words, the attaching set of $\sigma$ consists of the vertices which can be added to $\sigma$ to form a clique of size $k+1$. 
Note that the attaching set of $\sigma$ is always an independent set, as an edge between two of its elements would yield a clique of size $k+2$ in $G(\C)$. 

\begin{definition}\label{def:splittable}
Let $\C\subseteq 2^{[n]}$ be a degree two code, and suppose that the largest clique in $G(\C)$ has size $k+1$. 
We say that $\C$ is \emph{splittable} if the following holds for every $\sigma\subseteq [n]$ of size $k$ on which $G(\C)$ is a clique: the attaching set of $\sigma$ can be partitioned as $A\sqcup B$ so that $P(\C)$ restricted to the attaching set consists of two disjoint total orders on $A$ and $B$ respectively.
\end{definition}

\begin{restatable}{theorem}{realizationthm}\label{thm:realization}
Let $\C\subseteq 2^{[n]}$ be an inductively pierced code, and let $k\ge 1$. If the largest clique in $G(\C)$ has size $k+1$, then 
\begin{itemize}
    \item[(I)]$\C$ has a well-formed realization by open balls in $\R^{k+1}$,
    \item[(II)] $\C$ has such a realization in $\R^k$ if and only if $\C$ is splittable, and
    \item[(III)] $\C$ does not have such a realization in $\R^{k-1}$. 
\end{itemize}
\end{restatable}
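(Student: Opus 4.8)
\textbf{Proof proposal for Theorem \ref{thm:realization}.}

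The plan is to prove all three parts by induction on $n$, using a piercing order to add balls one at a time. For the base case, a $k$-inductively pierced code with largest clique of size $k+1$ that is minimal (essentially the code whose realization is $k+1$ mutually overlapping balls in general position) visibly has a well-formed realization in $\R^{k+1}$ and, by a dimension/rank count, not in $\R^{k-1}$. For the inductive step, by Proposition \ref{prop:elimination-neuron} and Proposition \ref{prop:greedy} there is an elimination neuron $i$ with some number $k'\le k$ of neighbors in $G(\C)$, and $\C\setminus i$ is inductively pierced; crucially, adding a $k'$-piercing back to $\C \setminus i$ should correspond geometrically to gluing in one new ball whose boundary sphere passes generically through a prescribed ``piercing configuration'' of the existing balls. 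The heart of the argument is a geometric lemma: given a well-formed realization $\U$ of $\C\setminus i$ in $\R^d$ (for the relevant $d$), and given the interval $[\sigma, \tau]$ associated to the piercing of $i$, one can add an open ball $U_i$ so that $U_i$ is contained in $\bigcap_{j\in\sigma} U_j$, meets exactly the sets indexed by $\tau$, is disjoint from everything else, and so that the enlarged collection of boundary spheres remains well-formed. This is where careful use of genericity and the chordality of $G(\C)$ enters: the neighbors of $i$ form a clique in $G(\C)$, so in any realization the corresponding balls pairwise overlap, and one needs the piercing interval to guarantee there is ``room'' to insert $U_i$ so that its boundary sphere cuts through exactly the right spheres and no more than $d$ of them at once.

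For part (I), I would show that in dimension $k+1$ this insertion can always be carried out. The key point is that in $\R^{k+1}$ there is always enough dimensional slack: when we add $U_i$ as a $k'$-piercing with $k'\le k \le d-1$, the intersection of its boundary sphere with the at most $k'$ relevant boundary spheres of the neighbors is $(d-1-k')$-dimensional and nonempty — never forced to be empty and never forcing a $(d+1)$-fold intersection — so well-formedness is preserved. Concretely, I would place $U_i$ as a small ball centered at a generic point of the atom $\bigcap_{j \in \sigma} U_j \setminus \bigcup_{j \notin \tau} U_j$ and then expand it until its boundary sphere is in the desired generic position relative to the $k'$ neighbor-spheres, using a dimension count to verify that a generic such sphere meets each of them transversally and meets no $d$ of the other spheres in the collection.

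For part (III), the lower bound, I would argue that a clique of size $k+1$ in $G(\C)$ forces $k+1$ balls that pairwise overlap with no containments, and more specifically forces the intersection pattern of $k+1$ balls to realize a full Boolean lattice of atoms on those $k+1$ indices (all $2^{k+1}$ regions nonempty, or at least enough of them). A standard fact about balls — $k+1$ balls in $\R^{k-1}$ cannot have all $2^{k+1}$ of their Venn regions nonempty, because their boundary spheres are $(k-2)$-dimensional and at most $k-1$ of them can meet — gives the obstruction; one checks via the definition of $k$-inductive piercing that the sub-realization on a maximal clique genuinely produces this forbidden configuration. For part (II), the realization in $\R^k$, I expect the condition of being splittable to be exactly what is needed to ``save a dimension'': in $\R^k$ there is one less degree of freedom, so when we attach the balls in the attaching set of a top clique $\sigma$ of size $k$, their boundary spheres all have to pass through the common $0$-sphere $\bigcap_{j\in\sigma}\partial U_j$ (two antipodal-ish points), and the linear order imposed by nesting along each side of that $0$-sphere is precisely a total order; two total orders $A$ and $B$ can be accommodated (one on each ``side''), but three cannot, and $P(\C)$ restricted to the attaching set records exactly these nesting orders. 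So the inductive construction goes through in $\R^k$ if and only if every top clique's attaching set splits into (at most) two chains, which is the splittability hypothesis; conversely, if some attaching set is not so splittable, the same sphere-arrangement obstruction as in part (III) (applied locally around the clique $\sigma$) rules out a well-formed realization in $\R^k$.

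The main obstacle I anticipate is the geometric insertion lemma in the form needed for part (II): showing not just that one can add each new ball with the right combinatorics, but that one can do so while \emph{maintaining} well-formedness globally and while respecting the two-chain structure, so that at the end the whole collection of $n$ spheres is well-formed in the tight dimension $\R^k$. Managing the genericity bookkeeping — ensuring that introducing $U_i$ does not accidentally create a $(d+1)$-fold sphere intersection elsewhere, and that the nesting orders forced by earlier insertions remain compatible with later ones — is the delicate part, and I would handle it by always choosing the new sphere from a dense open set of admissible spheres (parametrized by center and radius) and checking that each finitely many ``bad'' conditions is closed of positive codimension.
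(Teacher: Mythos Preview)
Your overall strategy---induction along a piercing order, with (III) coming from the fact that a $(k+1)$-clique forces $k+1$ independent balls, and the necessity direction of (II) coming from the two-point $0$-sphere that the attaching-set balls must divide between themselves---matches the paper's approach closely and is correct. Where you diverge is in the \emph{constructive} halves of (I) and (II), and here there is a genuine gap.

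The paper does not place the new ball at a generic point of the region $\bigcap_{j\in\sigma}U_j\setminus\bigcup_{j\notin\tau}U_j$ and then ``expand until generic.'' Instead it introduces the notion of a \emph{pierceable} interval: $[\sigma,\tau]$ is pierceable at $p$ if the codewords appearing in every neighborhood of $p$ are exactly $[\sigma,\tau]$, equivalently $p$ lies on $\partial U_j$ for each $j\in\tau\setminus\sigma$, inside $U_j$ for $j\in\sigma$, and outside $\overline{U_j}$ for $j\notin\tau$. The new ball is then a \emph{small} ball centered at such a $p$; no expansion or further genericity is needed, and well-formedness together with the correct code are automatic. The crucial point is that the induction is strengthened: one proves not just that a well-formed realization exists, but that \emph{every interval of rank $\le k$ that could later serve as a piercing interval is pierceable at some point}. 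This is the invariant that guarantees the next step can be carried out. Your proposal lacks any such invariant; you assert there is ``room'' to insert $U_i$, but after several rounds of your expand-to-generic procedure there is no a priori reason the needed atom-adjacency configuration survives. The paper's Lemma~\ref{lem:step} shows precisely that placing a small ball at a pierceable point preserves pierceability of all old intervals and creates pierceability for all new ones, and Lemma~\ref{lem:independent} (on independent balls) supplies the pierceable points for the new intervals.

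For (II) the same idea is refined: in $\R^k$ one cannot maintain pierceability of \emph{all} rank-$k$ intervals (the relevant $0$-sphere has only two points, and one gets used up), so the paper introduces \emph{accessible} intervals---those rank-$k$ intervals whose $\sigma$ meets the attaching set in exactly one of the two chains $A,B$---and maintains pierceability only for accessible intervals. Splittability is exactly what guarantees that every piercing interval that arises is accessible. Your intuition that the two chains correspond to the two points of the $0$-sphere is correct, but note a small inaccuracy: the boundary spheres of the attaching-set balls do not ``pass through'' the $0$-sphere; rather (by the independence criterion for $d+1$ balls in $\R^d$) each such ball's closure contains exactly one of the two points, and that dichotomy is what yields the partition $A\sqcup B$.
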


Note that Theorem \ref{thm:realization} disregards 0-inductively pierced codes.
These codes are simple to realize since their realizations consist of sets that are either pairwise disjoint or nested, which can always be managed in $\R^1$. 

\begin{figure}
\[
\includegraphics{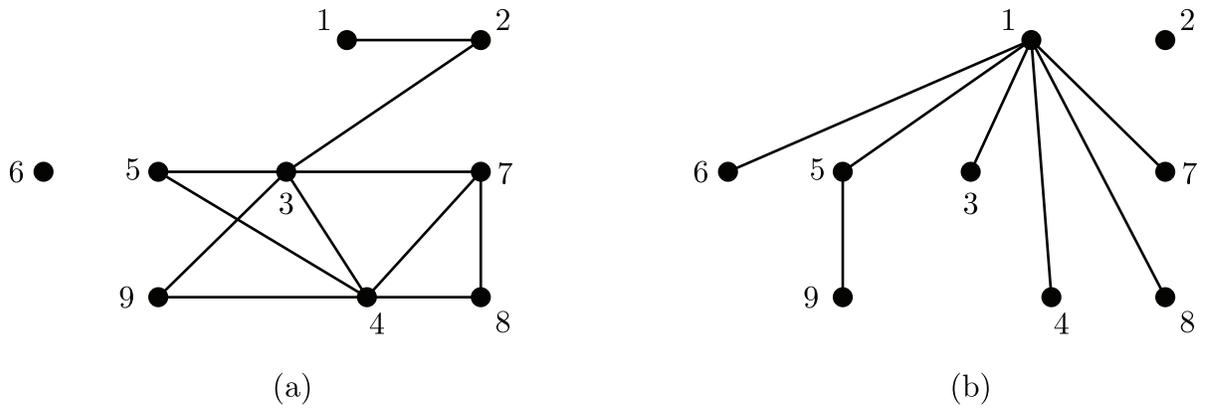}
\]
\caption{(a) The graph $G(\C)$ associated to $\C$. (b) The Hasse diagram of $P(\C)$.}
\label{fig:G-hat-C}
\end{figure}

\begin{figure}
\[
\includegraphics{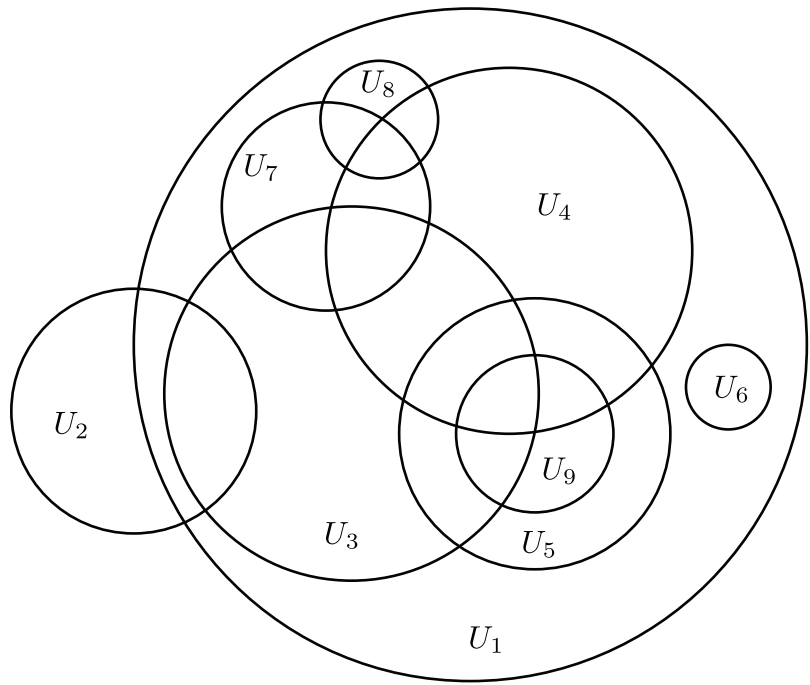}
\]
\caption{A well-formed realization of $\C$ by open balls in $\R^2$.}
\label{fig:2-dimensional-realization}
\end{figure}

\begin{example}
Consider the following code:
\begin{align*}
\C =&\{13459, 1347, 1478, 123, 16, \\
&\, \, \, 1345, 1359, 1459, 134, 135, 145, 137, 147, 148, 159, 178, 12, 13, 14, 15, 17, 18, 1, 2,\emptyset \}.
\end{align*}
Above, the maximal codewords appear on the first line and the remaining codewords are listed on the second line.
This code is 2-inductively pierced, and the natural linear order on $\{1,2,\ldots, 9\}$ is a piercing order.
It further turns out that $\C$ is splittable.
Note, for example, that the attaching set of $\{3,4\}$ is $\{5,7,9\}$, and the partition $\{5,9\}\sqcup \{7\}$ has $P(\C)$ a total order on each part. 
Figure \ref{fig:G-hat-C} shows $G(\C)$ and $P(\C)$. 
Item (II) of Theorem \ref{thm:realization} implies that $\C$ has a well-formed realization by open balls in $\R^2$, and Figure \ref{fig:2-dimensional-realization} shows such a realization. 
\end{example}

\begin{example}
Consider the following code:
\begin{align*}
\C =&\{123, 124, 125, \\
&\, \, \, 12, 13, 14, 15, 23, 24, 25, 1, 2, 3, 4, 5, \emptyset \}.
\end{align*}

This code is 2-inductively pierced; indeed, the natural linear order is a piercing order, and the largest clique in $G(\C)$ has size three. However, this code is not splittable. The attaching set of $\sigma=\{1,2\}$ is $\{3,4,5\}$ and this set cannot be partitioned into two linearly ordered subsets in $P(\C)$, since $P(\C)$ is an antichain on this set. Thus, Theorem \ref{thm:realization} implies that this code cannot be realized with open balls in $\R^2$, but such a realization is possible in $\R^3$.
One such realization in $\R^3$ consists of two intersecting unit balls $U_1$ and $U_2$, with $U_3$, $U_4$, and $U_5$ equal to small disjoint balls centered at points on the circle where the boundaries of $U_1$ and $U_2$ meet. 
\end{example}

\section{Characterizing Inductively Pierced Codes}\label{sec:characterization}

To begin, we justify several important statements about the canonical form, which may be of general interest.
First, we describe the effect of eliminating a neuron in any (not necessarily inductively pierced) code.
We then characterize how a piercing impacts the canonical form, again without assuming that the code in question is inductively pierced.
This yields a short proof of Proposition \ref{prop:CFdegreetwo}, establishing that every inductively pierced code is degree two.

\begin{lemma}\label{lem:deletion}
For any code $\C\subseteq 2^{[n]}$, the canonical form of $\C\setminus n \subseteq 2^{[n-1]}$ consists of exactly the pseudo-monomials in $\CF(J_\C)$ which do not depend on the variable $x_n$; that is,
\[
\CF(J_{\C\setminus n}) = \CF(J_\C) \setminus\{f\in \CF(J_\C) \mid x_n \text{ or } (1-x_n) \text{ divides }f\}.
\]
\end{lemma}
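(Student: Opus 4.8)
\textbf{Proof plan for Lemma \ref{lem:deletion}.}

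The plan is to set up a correspondence between pseudo-monomials in $J_{\C\setminus n}$ and pseudo-monomials in $J_\C$ that avoid the variable $x_n$, and then check that this correspondence preserves membership in the respective canonical forms. First I would recall the geometric/combinatorial characterization of $J_\C$ already noted in the excerpt: a pseudo-monomial $\prod_{i\in\sigma}x_i\prod_{j\in\tau}(1-x_j)$ lies in $J_\C$ if and only if it vanishes on every codeword of $\C$, equivalently if and only if there is no $\gamma\in\C$ with $\sigma\subseteq\gamma$ and $\gamma\cap\tau=\emptyset$. Using this, for a pseudo-monomial $g$ in the variables $x_1,\ldots,x_{n-1}$, I would show directly that $g\in J_{\C\setminus n}$ if and only if $g\in J_\C$: the point is that the codewords of $\C\setminus n$ are exactly the sets $\gamma\setminus\{n\}$ for $\gamma\in\C$, and since $g$ does not involve $x_n$, whether $g$ vanishes on $\gamma$ depends only on $\gamma\cap[n-1]=\gamma\setminus\{n\}$. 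So $g$ vanishes on all of $\C$ iff it vanishes on all of $\C\setminus n$.

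Next I would transfer this from ideal membership to canonical form membership. The canonical form is the set of \emph{minimal} pseudo-monomials under divisibility. So I need: (a) if $f\in\CF(J_\C)$ and $f$ does not depend on $x_n$, then $f$ is minimal among pseudo-monomials in $J_{\C\setminus n}$; and (b) conversely, if $g\in\CF(J_{\C\setminus n})$ then $g\in\CF(J_\C)$. For (a): $f\in J_{\C\setminus n}$ by the membership equivalence above; if some proper divisor $g$ of $f$ were in $J_{\C\setminus n}$, then $g$ also avoids $x_n$ (being a divisor of $f$), hence $g\in J_\C$, contradicting minimality of $f$ in $J_\C$. For (b): $g\in\CF(J_{\C\setminus n})$ avoids $x_n$ and lies in $J_{\C\setminus n}$, so $g\in J_\C$; if $g$ had a proper divisor $h\in J_\C$, then $h$ still avoids $x_n$, so $h\in J_{\C\setminus n}$, contradicting minimality of $g$. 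Thus the pseudo-monomials of $\CF(J_{\C\setminus n})$ are exactly the pseudo-monomials of $\CF(J_\C)$ that avoid $x_n$, which is the claimed set-difference description.

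I expect the main subtlety — rather than a true obstacle — to be making the membership equivalence ``$g\in J_{\C\setminus n}\iff g\in J_\C$'' airtight, since $J_\C$ is defined as an ideal generated by the indicator pseudo-monomials $\rho_\sigma$ for $\sigma\notin\C$, not directly as ``everything vanishing on $\C$.'' The excerpt does supply the needed fact (a pseudo-monomial lies in $J_\C$ iff it vanishes on every codeword), so I would lean on that rather than reprove it; if one wanted a self-contained argument one could instead expand $g$ in terms of indicators, observing that $\rho_\gamma$ for $\gamma\in 2^{[n-1]}$ relates to $\rho_{\gamma}$ and $\rho_{\gamma\cup\{n\}}$ in $2^{[n]}$ via $\rho_\gamma^{(n-1)} = \rho_\gamma^{(n)} + \rho_{\gamma\cup\{n\}}^{(n)}$, and tracking which indicators appear. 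One small point to state carefully is that a divisor of a pseudo-monomial that does not involve $x_n$ also does not involve $x_n$ (and is itself a pseudo-monomial), which is immediate from the definition of divisibility of pseudo-monomials but is used in both directions above.
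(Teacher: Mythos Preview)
Your proposal is correct and follows essentially the same approach as the paper's proof: establish that a pseudo-monomial not depending on $x_n$ lies in $J_{\C\setminus n}$ if and only if it lies in $J_\C$ (via the vanishing-on-codewords characterization), and then pass to minimal elements. The paper's write-up is slightly terser, jumping directly from the ideal-membership equivalence to the conclusion about minimal pseudo-monomials, whereas you spell out the two directions (a) and (b) of the minimality transfer and the observation that pseudo-monomial divisors of an $x_n$-free pseudo-monomial are themselves $x_n$-free; but the underlying argument is the same.
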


\begin{proof}
First we argue that the pseudo-monomials in $J_{\C\setminus n}$ are exactly those in $J_\C$ that do not depend on $x_n$. 
Let $f$ be any pseudo-monomial that does not depend on $x_n$.
Observe that $f(\sigma) = 0$ if and only if $f(\sigma\setminus \{n\}) = 0$ for every $\sigma\subseteq [n]$. 
Thus $f$ vanishes on every codeword of $\C$ if and only if $f$ vanishes on every codeword of $\C\setminus n$. 
Since the pseudo-monomials in the neural ideal of a code are exactly those which vanish on every codeword, we conclude that $f\in J_{\C\setminus n}$ if and only if $f\in J_\C$.
Consequently, the minimal pseudo-monomials in $J_{\C\setminus n}$ are the minimal pseudo-monomials in $J_\C$ that do not depend on $x_n$, proving the result.
\end{proof}

\begin{lemma}\label{lem:CF-pierced}
Let $\C\subseteq 2^{[n]}$ and let $n$ be a $k$-piercing of $\C$ with associated interval $[\sigma, \tau]$.
Then
\begin{equation}
\tag{$\star$}
\label{eq:CF-pierced}
 \CF(J_\C) =  \CF(J_{\C\setminus n})\sqcup \{x_n x_i \mid i\in [n-1]\setminus \tau\} \sqcup \{x_n(1-x_j) \mid j\in \sigma\}.
\end{equation}
\end{lemma}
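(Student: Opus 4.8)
The plan is to verify the identity \eqref{eq:CF-pierced} by a two-way containment argument on pseudo-monomials, using the geometric/combinatorial description of $J_\C$ as the set of pseudo-monomials vanishing on all codewords of $\C$. First I would record what the piercing hypothesis says about which subsets are codewords: since $n$ is a $k$-piercing with associated interval $[\sigma,\tau]$, we have $\C = (\C\setminus n)\cup [\sigma\cup\{n\},\tau\cup\{n\}]$, and every codeword of $\C\setminus n$ is a codeword of $\C$ (viewed as a subset of $[n]$ not containing $n$, since $[\sigma,\tau]\subseteq \C\setminus n$ and the deletion only removes $n$). Concretely: a set $\gamma\subseteq[n-1]$ is a codeword of $\C$ iff it is a codeword of $\C\setminus n$; and a set $\gamma\cup\{n\}$ with $\gamma\subseteq[n-1]$ is a codeword of $\C$ iff $\sigma\subseteq\gamma\subseteq\tau$.

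Next I would handle the three pieces on the right-hand side. The inclusion of $\CF(J_{\C\setminus n})$ into $\CF(J_\C)$ is immediate from Lemma \ref{lem:deletion}, which identifies $\CF(J_{\C\setminus n})$ with the $x_n$-free members of $\CF(J_\C)$; this also shows these contribute exactly the $x_n$-free part of $\CF(J_\C)$, so it remains to pin down the pseudo-monomials in $\CF(J_\C)$ that do involve $x_n$. For a pseudo-monomial $f$ divisible by $x_n$, write $f = x_n g$ where $g$ is an $x_n$-free pseudo-monomial; then $f\in J_\C$ iff $g$ vanishes on every codeword of $\C$ containing $n$, i.e. iff $g$ vanishes on $[\sigma\cup\{n\},\tau\cup\{n\}]$, which by the computation above means $g$ vanishes on every $\gamma$ with $\sigma\subseteq\gamma\subseteq\tau$. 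I would then check that among $x_n$-divisible pseudo-monomials, the minimal ones whose product with $x_n$ vanishes on $[\sigma,\tau]$ are exactly $x_n x_i$ for $i\in[n-1]\setminus\tau$ (these kill $[\sigma,\tau]$ because every element of the interval omits such an $i$) and $x_n(1-x_j)$ for $j\in\sigma$ (these kill $[\sigma,\tau]$ because every element contains such a $j$); any pseudo-monomial $x_n g$ in $J_\C$ must have $g$ vanish on the whole interval, and a short case analysis on the support of $g$ shows $g$ is divisible by one of $x_i$ ($i\notin\tau$) or $(1-x_j)$ ($j\in\sigma$), hence is not minimal unless it is one of the listed degree-two terms. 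I also need minimality of the listed terms themselves in $J_\C$: $x_nx_i$ and $x_n(1-x_j)$ have no proper pseudo-monomial divisors in $J_\C$ because $x_n\notin J_\C$ (convention (2), as $n$ appears in some codeword), $x_i\notin J_\C$, and $1-x_j\notin J_\C$ (convention (1)). Finally, I should confirm the union is disjoint and that $\CF(J_{\C\setminus n})$ does not already contain any of these (automatic, since they involve $x_n$).

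The main obstacle I anticipate is the case analysis showing that no $x_n$-divisible pseudo-monomial of degree $\ge 3$ survives in the canonical form — that is, that any minimal $x_n g\in J_\C$ forces $g$ to be a single variable $x_i$ ($i\notin\tau$) or a single factor $(1-x_j)$ ($j\in\sigma$). The delicate point is that $g$ could a priori be a product of an $x_i$-part and a $(1-x_j)$-part, and one must argue that if $g$ vanishes on all of $[\sigma,\tau]$ then either its $x$-support meets $[n-1]\setminus\tau$ or its $(1-x)$-support meets $\sigma$: indeed, if neither held, the $x$-support would lie in $\tau$ and the $(1-x)$-support would lie in $[n-1]\setminus\sigma$ (and be disjoint from the $x$-support), and then one can exhibit an element $\gamma\in[\sigma,\tau]$ on which $g$ does not vanish (take $\gamma = \sigma\cup(\text{$x$-support of }g)$, intersected appropriately with $\tau$, avoiding the $(1-x)$-support), contradicting $x_n g\in J_\C$. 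Once that dichotomy is in hand, minimality immediately discards everything but the listed degree-two pseudo-monomials, and combining with Lemma \ref{lem:deletion} for the $x_n$-free part yields \eqref{eq:CF-pierced}.
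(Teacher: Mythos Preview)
Your approach is essentially the same as the paper's, but there is one gap: you only handle pseudo-monomials in $\CF(J_\C)$ that are divisible by $x_n$, and never address those divisible by $(1-x_n)$. When you write ``it remains to pin down the pseudo-monomials in $\CF(J_\C)$ that do involve $x_n$'' and then immediately restrict to $f = x_n g$, you have implicitly assumed that no minimal pseudo-monomial of $J_\C$ carries a factor of $(1-x_n)$, and this needs justification.

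The missing argument is short. If $f = (1-x_n)g \in J_\C$ with $g$ free of $x_n$, then $g$ must vanish on every codeword of $\C$ not containing $n$, i.e.\ on every codeword of $\C\setminus n$. But $g$ is $x_n$-free, so $g(\gamma\cup\{n\}) = g(\gamma)$ for every $\gamma\in[\sigma,\tau]\subseteq \C\setminus n$, and hence $g$ also vanishes on the remaining codewords of $\C$. Thus $g\in J_\C$ already, and $f = (1-x_n)g$ is not minimal. The paper handles exactly this as a separate case (its Case~2), alongside the $x_n$-free case (Lemma~\ref{lem:deletion}) and the $x_n$-divisible case that you do treat; your Case~3 analysis and construction of a witnessing codeword $\gamma = \sigma\cup(\text{$x$-support of }g)$ match the paper's. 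Once you add the $(1-x_n)$ case, your proof is complete and aligns with the paper's.
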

\begin{proof}
By definition of $k$-piercing, the codewords of $\C$ containing $n$ form an interval $[\sigma\cup\{n\},\tau\cup\{n\}]$.
Hence $n$ only appears in codewords that contain $\sigma$, and contain no $i\in [n]\setminus \tau$. 
Consequently, $x_n(1-x_i)\in J_\C$ for all $i\in \sigma$ and $x_nx_i\in J_\C$ for every $i\notin \tau$.
Our conventions (1)--(3) forbid $J_\C$ from containing any degree one polynomials, and so these pseudo-monomials are minimal in $J_\C$, and hence lie in $\CF(J_\C)$. 
Lemma \ref{lem:deletion} shows that $\CF(J_{\C\setminus n})$ is obtained from $\CF(J_\C)$ by  deleting any pseudo-monomials in $\CF(J_\C)$ that depend on $x_n$.
Putting all of this together, we have
 \[
 \CF(J_\C) \supseteq \CF(J_{\C\setminus n})\sqcup \{x_nx_i \mid i\in [n-1]\setminus \tau\} \sqcup \{x_n(1-x_j) \mid j\in \sigma\}.
 \]
 
For the reverse containment, it will suffice to show that every pseudo-monomial in $J_\C$ is a multiple of one of the pseudo-monomials in the union of the three sets above.
Equivalently, we must show the following.
\smallskip

\noindent \underline{Claim:} If $f$ is a pseudo-monomial that is \emph{not} a multiple of some pseudo-monomial in 
\[
\CF(J_{\C\setminus n})\sqcup \{x_nx_i \mid i\in [n-1]\setminus \tau\} \sqcup \{x_n(1-x_j) \mid j\in \sigma\}\]
then $f(c)\neq 0$ for some codeword $c\in \C$.

\smallskip
\noindent \underline{Case 1: $f$ does not depend on $x_n$}.
Here Lemma \ref{lem:deletion} implies that $f$ does not lie in $J_{\C\setminus n}$. 
Hence there is a codeword $c'\in \C\setminus n$ with $f(c')\neq 0$. 
By the definition of a $k$-piercing, every $c'\in \C\setminus n$ is also a codeword of $\C$.\smallskip

\noindent \underline{Case 2: $(1-x_n)$ divides $f$}.
We may write $f = (1-x_n)g$ for some pseudo-monomial $g$ that does not depend on $x_n$. 
Since $f$ is not a multiple of any pseudo-monomial in $\CF(J_{\C\setminus n})$, we must have $g\notin J_{\C\setminus n}$.
Hence there is a codeword $c'\in \C\setminus n$ with $g(c') \neq 0$.
But by the definition of a $k$-piercing, $c'$ is also a codeword of $\C$, and furthermore we have $f(c') = g(c') \neq 0$ since $c'$ does not contain $n$.

\smallskip
\noindent \underline{Case 3: $x_n$ divides $f$}.
We can write $f = x_ng$ where $g$ is a pseudo-monomial that does not depend on $x_n$. 
Since $f$ is not a multiple of $x_nx_i$ for any $i\in [n-1]\setminus \tau$, we know that $x_i$ does not divide $g$ for any $i\in [n-1]\setminus \tau$. 
Similarly, since $f$ is not a multiple of any $x_n(1-x_j)$ for $j\in \sigma$, we know that $(1-x_j)$ does not divide $g$ for any $j\in \sigma$. 
Consequently, for some disjoint $\tau'\subseteq \tau$ and $\sigma'\subseteq [n-1]\setminus \sigma$ we have \[
g = \prod_{i\in \tau'} x_i \prod_{j\in \sigma'} (1-x_j).
\]
By the definition of a $k$-piercing, $\C$ contains the entire interval of codewords $[\sigma\cup\{n\}, \tau\cup \{n\}]$.
In particular, $c\eqdef \sigma \cup \tau' \cup\{n\}$ is a codeword of $\C$. 
We see directly that $f(c) \neq 0$, so the claim follows in this case as well, establishing the lemma. 
\end{proof}

\degreetwoprop*
\begin{proof}
We work by induction on $n$. 
As a base case, note that there is a unique code on one neuron that satisfies our conventions (1)--(3) in the introduction, namely $\C = \{1,\emptyset\}$. 
Here we have $\CF(J_\C)=\emptyset$, so $\C$ is degree two vacuously.
Now, assume that every inductively pierced code on $n-1$ neurons is degree two, and let $\C$ be an inductively pierced code on $n$ neurons. 
Without loss of generality, suppose $n$ is a $k$-piercing of $\C$.
By the formula (\ref{eq:CF-pierced}) in Lemma \ref{lem:CF-pierced}, the canonical form $\CF(J_\C)$ is obtained by adding degree two pseudo-monomials to $\CF(J_{\C\setminus n})$. 
The code $\C\setminus n$ is degree two by inductive hypothesis, and so the result follows.
\end{proof}

We now prove Proposition \ref{prop:elimination-neuron}, to establish that piercings correspond to elimination neurons.
As we will see, this implies that the graph $G(\C)$ associated to an inductively pierced code is chordal.
We are then able to prove Theorem \ref{thm:characterization}, which shows that, in fact, any degree two code whose corresponding graph is chordal must be inductively pierced.

Below we will work with degree two codes $\C$, and the associated graph $G(\C)$ and partial order $P(\C)$. 
For any neuron $i\in [n]$, we use the notation $N(i)$ for the neighborhood of $i$ in $G(\C)$, and $I(i)$ for the order ideal of $i$ in $P(\C)$.
That is, \begin{align*}
    N(i)\eqdef & \{j\in [n] \mid \text{ $i$ and $j$ adjacent in $G(\C)$}\}, \text{ and}\\
    I(i)\eqdef & \{j\in [n] \mid \text{ $j<i$ in $P(\C)$}\}. 
\end{align*}

\elimneuronprop*
\begin{proof}
Up to permutation, it suffices to establish that $n$ is a $k$-piercing of $\C$ if and only if $n$ is an elimination neuron with $k$ neighbors in $G(\C)$. 
First suppose that $n$ is a $k$-piercing of $\C$ with associated interval $[\sigma, \tau]$. 
By (\ref{eq:CF-pierced}) from Lemma \ref{lem:CF-pierced}, $(1-x_n)$ is not a factor of any pseudo-monomial in $\CF(J_\C)$, and so $n$ is minimal in $P(\C)$. 
The equation (\ref{eq:CF-pierced}) also implies that the neighborhood of $n$ in $G(\C)$ is exactly $\tau\setminus \sigma$, which has $k$ elements. 
It remains to argue that these neighbors form a clique.
Let $i,j\in \tau\setminus \sigma$, and note from the definition of a $k$-piercing that $\sigma$, $\sigma\cup\{i\}$, $\sigma\cup \{j\}$\ox and $\sigma\cup \{i,j\}$ are all codewords of $\C$.
The pseudo-monomials $(1-x_i)(1-x_j)$, $x_i(1-x_j)$, $x_j(1-x_i)$\ox and $x_ix_j$ respectively do not vanish on these codewords, and hence do not lie in $\CF(J_\C)$. 
Since these are the only degree two pseudo-monomials involving $x_i$ and $x_j$, we conclude that no pseudo-monomial in $\CF(J_\C)$ has variables indexed by $i$ and $j$. Hence $i$ and $j$ are adjacent in $G(\C)$, and the neighbors of $n$ form a clique as desired. 

We now argue that if $n$ is an elimination neuron with $k$ neighbors in $G(\C)$, then $n$ is a $k$-piercing of $\C$. Let $\sigma = I(n)$ and let $\tau = I(n)\cup N(n)$. Note that $\tau\backslash \sigma = N(n)$, which has size $k$.
Since $N(n)$ is a clique, for every pair of neurons in $N(n)$ there is no pseudo-monomial in $\CF(J_\C)$ whose variables are indexed by these neurons.
To show that $n$ is a $k$-piercing, we must verify two claims.

\smallskip\noindent \underline{Claim 1: $[\sigma,\tau\cup\{n\}]\subseteq \C$.}
Let $\gamma\in [\sigma,\tau\cup\{n\}]$.
It will suffice to show that every pseudo-monomial in $\CF(J_\C)$ vanishes on $\gamma$. 
First consider the pseudo-monomials that depend on $x_n$. 
Since $n$ is minimal in $P(\C)$, every pseudo-monomial in $\CF(J_\C)$ that depends on $x_n$ is of the form $x_nx_i$ with $i\notin \tau$, or $x_n(1-x_j)$ with $j\in \sigma$. 
In both cases the second term in the pseudo-monomial vanishes since $\sigma\subseteq \gamma \subseteq \tau\cup\{n\}$. 

It remains to consider pseudo-monomials that do not depend on $x_n$.
Here it suffices to consider the case where $n\notin \gamma$, since adding or removing $n$ will not affect the vanishing of the pseudo-monomial in question.
First consider those of the form $x_ix_j$. 
Since the neighbors of $n$ in $G(\C)$ form a clique on $\tau\setminus \sigma$, one of $i$ or $j$ lies outside $\tau\setminus \sigma$. 
If $i$ lies in $\sigma$, then $x_n(1-x_i)$ lies in $\CF(J_\C)$, and hence \[
x_n(x_ix_j) + (x_n(1-x_i))x_j = x_nx_j
\]
lies in $J_\C$. 
This implies that $j\notin \tau$ by (\ref{eq:CF-pierced}), and in particular $j\notin \gamma$.
Hence $x_ix_j$ vanishes on $\gamma$. 

Finally, consider a pseudo-monomial in $\CF(J_\C)$ of the form $x_i(1-x_j)$ where $i,j\in[n-1]$.
Suppose for contradiction that this pseudo-monomial does not vanish on $\gamma$.
Then $i\in \gamma$ and $j\notin \gamma$. 
In particular, $i\in \tau$ and $j\notin \sigma$. 
We will argue that $i$ and $j$ both lie in $\tau\setminus \sigma$, contradicting the fact that $G(\C)$ is a clique on these neurons. 
It suffices to show that $i\notin \sigma$ and $j\in \tau$.
Note that if $i\in\sigma$ then $x_n(1-x_i)\in J_\C$, and so
\[
x_n(1-x_i)(1-x_j) + x_nx_i (1-x_j) = x_n(1-x_j)
\]
lies in $\CF(J_\C)$, contradicting the fact that $j\notin \sigma$. 
Similarly, if $j\notin \tau$ then $x_nx_j$ lies in $J_\C$, and so
\[
x_nx_ix_j + x_nx_i(1-x_j) = x_nx_i
\]
lies in $\CF(J_\C)$ contradicting the fact that $i\in \tau$.
Hence $i$ and $j$ both lie in $\tau\setminus \sigma$, contradicting the fact that $G(\C)$ is a clique on this set. We conclude that $[\sigma,\tau\cup\{n\}]\subseteq \C$. 

\smallskip\noindent \underline{Claim 2: If $c\in \C$ and $n\in c$, then $c\in [\sigma\cup\{n\},\tau\cup\{n\}]$.}
Here we use the fact that every pseudo-monomial in $\CF(J_\C)$ vanishes on $c$, together with (\ref{eq:CF-pierced}).
For all $j\in \sigma$, the pseudo-monomial $x_n(1-x_j)$ vanishes on $c$, so we must have $\sigma\subseteq c$.
Likewise, for every $j\notin \tau\cup\{n\}$ the monomial $x_nx_j$ vanishes on $c$, and so $c\subseteq \tau\cup\{n\}$.
 This proves the result. 
 \end{proof}

\begin{lemma}\label{lem:neighborhoods}
Let $\C$ be a degree two code, and let $i,j\in[n]$ be such that $j<i$ in $P(\C)$.
Then 
\[
N(j) \subseteq I(i) \cup N(i).
\]
\end{lemma}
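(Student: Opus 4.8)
The plan is to translate the hypothesis $j < i$ in $P(\C)$ into the algebraic statement $x_j(1-x_i)\in\CF(J_\C)$, and then argue by contradiction: suppose some neuron $\ell\in N(j)$ satisfies $\ell\notin I(i)\cup N(i)$. The goal is to derive that $\ell$ and $j$ cannot in fact be adjacent in $G(\C)$, contradicting $\ell\in N(j)$. I would first unpack what $\ell\notin I(i)\cup N(i)$ means: $\ell$ is not below $i$ in $P(\C)$, so $x_\ell(1-x_i)\notin\CF(J_\C)$, and $\ell$ is not adjacent to $i$ in $G(\C)$, so there is a degree two pseudo-monomial in $\CF(J_\C)$ whose variables are $x_\ell$ and $x_i$. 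The candidates are $x_\ell x_i$, $x_i(1-x_\ell)$, $x_\ell(1-x_i)$, and $(1-x_\ell)(1-x_i)$; the last is forbidden by convention (1) (no all-$(1-x)$ term), and $x_\ell(1-x_i)$ is ruled out by $\ell\notin I(i)$. So the pseudo-monomial witnessing non-adjacency of $\ell$ and $i$ is either $x_\ell x_i$ or $x_i(1-x_\ell)$.

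Next I would combine this with $x_j(1-x_i)\in J_\C$ using the same kind of linear-algebra-over-$\F_2$ manipulations used elsewhere in the excerpt. In the first subcase, $x_\ell x_i\in J_\C$ together with $x_j(1-x_i)\in J_\C$ gives
\[
(x_\ell x_i)x_j + x_\ell(x_j(1-x_i)) = x_\ell x_j \in J_\C,
\]
and since $\C$ is degree two this degree two pseudo-monomial actually lies in $\CF(J_\C)$; its variables are $x_\ell$ and $x_j$, so $\ell$ and $j$ are not adjacent in $G(\C)$, contradicting $\ell\in N(j)$. In the second subcase, $x_i(1-x_\ell)\in J_\C$ together with $x_j(1-x_i)\in J_\C$ gives
\[
x_j(1-x_i) + (1-x_\ell)(x_j(1-x_i)) \cdot(\text{adjust})
\]
--- more cleanly, $x_j(1-x_\ell) = x_j(1-x_i)(1-x_\ell) + x_j\big(x_i(1-x_\ell)\big)$, and both summands lie in $J_\C$, so $x_j(1-x_\ell)\in J_\C$, hence in $\CF(J_\C)$ by degree two. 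Again its variables are $x_j$ and $x_\ell$, so $j$ and $\ell$ are non-adjacent in $G(\C)$, the same contradiction. Therefore no such $\ell$ exists, and $N(j)\subseteq I(i)\cup N(i)$.

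I expect the main (minor) obstacle to be bookkeeping the case analysis correctly: enumerating exactly which degree two pseudo-monomials can witness non-adjacency of $\ell$ and $i$, and making sure conventions (1) and (3) are invoked to discard the spurious ones, and that $\ell\notin I(i)$ is used precisely to kill the $x_\ell(1-x_i)$ case. The algebraic identities themselves are the same style of $\F_2$ computation already shown in the excerpt (e.g.\ in the transitivity argument for $P(\C)$ and in the proof of Proposition~\ref{prop:elimination-neuron}), so they should go through routinely once the right products are written down; the degree two hypothesis is what upgrades "lies in $J_\C$" to "lies in $\CF(J_\C)$" in each case. One should also handle the degenerate possibility $\ell = j$ or $\ell = i$ at the outset, though $\ell=i$ contradicts $\ell\notin N(i)$ vacuously and $\ell=j$ is impossible since a vertex is not its own neighbor, so these are quick to dispatch.
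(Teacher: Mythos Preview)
Your proposal is correct and follows essentially the same argument as the paper's proof: both pick $\ell\in N(j)\setminus(I(i)\cup N(i))$, observe that the only degree-two pseudo-monomials in $\CF(J_\C)$ involving $x_i$ and $x_\ell$ are $x_ix_\ell$ or $x_i(1-x_\ell)$, and then combine the relevant one with $x_j(1-x_i)$ to produce a degree-two pseudo-monomial in $J_\C$ (hence $\CF(J_\C)$) with variables $x_j,x_\ell$, contradicting $\ell\in N(j)$. One small correction: your remark that ``$\ell=i$ contradicts $\ell\notin N(i)$ vacuously'' is off---rather, $\ell=i$ is ruled out because $j<i$ forces $x_j(1-x_i)\in\CF(J_\C)$, so $i\notin N(j)$.
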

\begin{proof}
Suppose for contradiction that $j$ is adjacent to some neuron $\ell$ that lies in neither $I(i)$ nor $N(i)$.
Since $\ell$ does not lie in $N(i)$, there must be a pseudo-monomial $f\in \CF(J_\C)$ that depends on $x_i$ and $x_\ell$.
Since $\ell$ does not lie in $I(i)$, the only possibilities are $f = x_ix_\ell$ or $f = x_i(1-x_\ell)$.
But $j<i$, and so $x_j(1-x_i)$ lies in $\CF(J_\C)$.
Multiplying $x_j(1-x_i)$ by either $x_\ell$ or $1-x_\ell$ and adding it to $x_jf$, we obtain a pseudo-monomial in $\CF(J_\C)$ that depends on $j$ and $\ell$, contradicting the fact that $j$ and $\ell$ are adjacent in $G(\C)$.
\end{proof}

We are now ready to prove Theorem \ref{thm:characterization}, and establish our characterization of inductively pierced codes.

\characterizationthm*
\begin{proof} If $\C$ is $k$-inductively pierced, then our previous work immediately indicates the required conditions hold.
Indeed, Proposition \ref{prop:CFdegreetwo} guarantees that $\C$ is degree two. Furthermore, Proposition \ref{prop:elimination-neuron} shows that a $k$-piercing order corresponds (in reverse) to a perfect elimination order on the vertices of $G(\C)$ in which each simplicial vertex has at most $k$ neighbors, implying that $G(\C)$ is chordal with maximum clique size at most $k+1$. 

It remains to argue the converse: if $\C$ is degree two and $G(\C)$ is chordal with all cliques having size at most $k+1$, we must argue that $\C$ is $k$-inductively pierced. 
Using  Proposition \ref{prop:elimination-neuron}, we will find an elimination neuron $i\in[n]$ with at most $k$ neighbors in $G(\C)$, and prove that $\C\backslash i$ is still degree two with $G(\C\setminus i)$ chordal and all cliques having size at most $k+1$. This allows us to inductively obtain a $k$-piercing order for $\C$. 

Since $G(\C)$ is chordal, $G(\C)$ contains at least one simplicial vertex.
Choose such a vertex $i\in[n]$ that is minimal amongst all simplicial vertices in $P(\C)$---that is, there is no {\it simplicial} vertex $j<i$.
We will show that in fact $i$ is minimal in $P(\C)$; that is, $I(i) =\emptyset$.

Let $G'$ be the induced subgraph of $G(\C)$ on the vertex set $N(i)\cup I(i)$.
Since every $j\in I(i)$ is not adjacent to $i$ in $G(\C)$, it will suffice to show that $G'$ is a clique. 
For contradiction, suppose that $G'$ is not a clique.
Since $G(\C)$ is chordal, any induced subgraph is chordal.
Every chordal graph that is not a clique contains two non-adjacent simplicial vertices\footnote{This result is well-known in the chordal graph literature; see for example the proof of Theorem 8.3.27 in \cite{West}.} and hence there is a vertex $j$ that is simplicial in $G'$ and not adjacent to $i$. 

We must have $j<i$, and by Lemma \ref{lem:neighborhoods} we have $N(j) \subseteq N(i)\cup I(i)$. 
In particular, the neighborhood of $j$ in $G'$ is the same as its neighborhood in $G$.
But since this neighborhood is a clique, this contradicts our choice of $i$ as a minimal simplicial vertex. 

Hence we can find a neuron $i\in[n]$ that is simplicial in $G(\C)$ and minimal in $P(\C)$.
Up to permutation, we can assume $i = n$.
Since the largest clique in $G(\C)$ is size $k+1$, we conclude that $n$ has $k'\le k$ neighbors in $G(\C)$.
By Proposition \ref{prop:elimination-neuron}, $n$ is a $k'$-piercing of $\C$.
By (\ref{eq:CF-pierced}) we see that $\C\setminus n$ is degree two, that $G(\C\setminus n)$ is obtained from $G(\C)$ by deleting $n$, and that $P(\C\setminus n)$ is obtained from $P(\C)$ by deleting $n$. 
Hence we may conclude inductively that $\C\setminus n$ is $k$-inductively pierced, and the result follows.
\end{proof} 

We conclude with a proof of Proposition \ref{prop:greedy}, establishing that piercing orders may be constructed greedily.

\greedyprop*
\begin{proof}
It suffices to consider the case $i=n$. 
The reverse implication is clear, and it remains to argue that if $\C$ is inductively pierced then so is $\C\setminus n$. 
By Theorem \ref{thm:characterization}, it suffices to show that $\C\setminus n$ is degree two, and that $G(\C\setminus n)$ is chordal.
By (\ref{eq:CF-pierced}), $\CF(J_{\C\setminus n})\subseteq \CF(J_\C)$, and so $\C\setminus n$ is degree two.
Moreover, $G(\C\setminus n)$ is the induced subgraph of $G(\C)$ on the vertex set $[n-1]$.
Since $G(\C)$ is chordal so is $G(\C\setminus n)$, and the result follows. 
\end{proof}

\section{Realizing Inductively Pierced Codes}\label{sec:realization}

Below, we will make use of existing results regarding \emph{set-theoretically independent} collections of open balls in $\R^d$---in our language, a collection $\U = \{U_1, U_2, \ldots, U_n\}$ of balls is set-theoretically independent whenever $\code(\U)$ is the full power set $2^{[n]}$. 
For concision, we will use the simpler term \emph{independent}.
We will need the following facts regarding collections of independent balls:
\begin{itemize}
\item[(i)] The largest size of a collection of independent balls in $\R^d$ is $d+1$. This was first established by R\'enyi, R\'enyi\ox and Sur\'anyi \cite{rrs_independent}.
\item[(ii)] Let $\{U_1,\ldots, U_k\}$ be a collection of $k\le d$ independent balls in $\R^d$. 
Then the intersection of the boundaries of the $U_i$ is a $(d-k)$-dimensional sphere $S$. 
In particular, the $U_i$ are a well-formed collection of balls. 
This result appears as Theorem 1 in a paper by Anusiak~\cite{anusiak}.
\item[(iii)] In the situation of item (ii), $S$ is the set of common limit points of all atoms in the realization $\U$ (Theorem 3 of \cite{anusiak}).
\item[(iv)] In the situation of item (ii), suppose additionally that $k=d$, and let $U_{d+1}$ be any ball.
Then $S = \{p,q\}$ is a 0-dimensional sphere, and the collection of balls $\{U_1,\ldots, U_d, U_{d+1}\}$ is independent if and only if the closure of $U_{d+1}$ contains exactly one of $p$ or $q$ (Theorem 2 of \cite{anusiak}).
\end{itemize}

To build well-formed realizations of inductively pierced codes, we will use the following geometric analog of an abstract piercing. 

\begin{definition}\label{def:geom_piercing}
Let $\U = \{U_1, \ldots, U_n\}$ be a realization of $\C\subseteq 2^{[n]}$ in $\R^d$.
An interval $[\sigma,\tau]\subseteq \C$ is called \emph{pierceable} at a point $p\in \R^d$ if the codewords which arise in any small neighborhood of $p$ are exactly those in $[\sigma,\tau]$.  
Equivalently, $[\sigma,\tau]$ is pierceable at $p$ if and only if $p$ is a limit point of the atom of every codeword in $[\sigma,\tau]$ and no others.
\end{definition}

Given a collection $\{U_1,\ldots, U_n\}$ of well-formed (but not necessarily independent) balls in $\R^d$, and a point $p\in \R^d$, it will be convenient to assign $p$ to a vector $\rho(p)$ of length $n$ on the symbols $\{+,0,-\}$, as follows. 
The $i$-th coordinate of $\rho(p)$ will record whether $p$ lies in the interior, boundary\ox or complement of the closure of $U_i$, with $+$, $0$\ox and $-$ denoting these respective possibilities.
With this notation, an interval $[\sigma,\tau]$ is pierceable at $p$ if and only if $\rho(p)$ has $+$'s in the coordinates of $\sigma$, $0$'s in the coordinates of $\tau\setminus\sigma$, and $-$'s everywhere else (note that to establish this we need to use item (iii) above on the collection of balls indexed by $\tau\setminus\sigma$). 
With this observation, we can prove the following technical lemma.

\begin{lemma}\label{lem:independent}
Let $1\le k \le d$, and let $\{U_1, \ldots, U_k\}$ be a collection of independent balls in $\R^d$. 
Then every interval $[\sigma,\tau]\subseteq 2^{[k]}$ is pierceable at some point.
\end{lemma}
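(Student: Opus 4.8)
The plan is to fix an interval $[\sigma,\tau]\subseteq 2^{[k]}$ and produce a point $p$ whose vector $\rho(p)$ has $+$'s on $\sigma$, $0$'s on $\tau\setminus\sigma$, and $-$'s on $[k]\setminus\tau$; by the observation preceding the lemma, this is exactly what it means for $[\sigma,\tau]$ to be pierceable at $p$. The natural way to do this is to build $p$ in two stages, using facts (ii) and (iii) about independent balls. First I would apply fact (ii) to the subcollection $\{U_i \mid i\in\tau\setminus\sigma\}$ (which has size at most $k\le d$ and is independent, since any subcollection of an independent collection is independent): its boundary intersection is a sphere $S$ of dimension $d-|\tau\setminus\sigma|\ge 0$, and in particular $S$ is nonempty. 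Every point of $S$ lies on the boundary of each $U_i$ for $i\in\tau\setminus\sigma$, giving the desired $0$'s in those coordinates.

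The remaining task is to choose the point of $S$ so that it additionally lies in the interior of $U_i$ for each $i\in\sigma$ and in the complement of $\overline{U_i}$ for each $i\in[k]\setminus\tau$. Here is where fact (iii) does the work: $S$ is the set of common limit points of all atoms of the realization $\{U_i\mid i\in\tau\setminus\sigma\}$. Because this subcollection is independent, the atom corresponding to the sign pattern ``$+$ on $\sigma$, $-$ on $[k]\setminus\tau$'' — more precisely, $\bigcap_{i\in\sigma}U_i \cap \bigcap_{i\in[k]\setminus\tau}(\R^d\setminus \overline{U_i})$, intersected appropriately — is nonempty and open, and has a limit point on $S$. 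Any point $q\in S$ that is a limit point of this particular atom will, for each $i\in\sigma$, be a limit point of a set contained in the open set $U_i$; since $q\notin\partial U_i$ would be needed to conclude $q\in U_i$, I need to be slightly careful. The cleanest route: pick $q\in S$ that is a limit point of the atom $A$ where the signs on $[k]\setminus(\tau\setminus\sigma)$ are ``$+$ on $\sigma$, $-$ on $[k]\setminus\tau$''; then take a point $p\in A$ very close to $q$. For $p$ close enough to $q$, continuity of the signed-distance functions to the $U_i$ (for $i\in\tau\setminus\sigma$) forces $\rho(p)$ to agree with $\rho(q)$ on those coordinates up to sign, but since $p\in A$ we instead want $p$ exactly on $S$.

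So the correct formulation is: take $q\in S$ that is a limit point of $A$, and note that the conditions ``interior of $U_i$'' ($i\in\sigma$) and ``complement of $\overline{U_i}$'' ($i\in[k]\setminus\tau$) are \emph{open} conditions, satisfied on all of $A$, hence satisfied in a neighborhood of every limit point of $A$ lying in $\overline{A}$; but $q$ itself might lie on some $\partial U_i$. The genuinely clean argument is to replace $A$ by a slightly shrunk open subset $A'$ whose closure still has $q$ as a limit point and on which the open conditions hold with room to spare, then observe $q\in\overline{A'}$ together with $q\in S$ gives $\rho(q)$ the pattern $+$ on $\sigma$ and $-$ on $[k]\setminus\tau$ (since these are closed conditions on $\overline{A'}$: being in $\overline{U_i}$ is closed, and being in the complement of $U_i$ is closed). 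Combining, $\rho(q)$ has $+$ on $\sigma$, $0$ on $\tau\setminus\sigma$, $-$ on $[k]\setminus\tau$, so $[\sigma,\tau]$ is pierceable at $q$.

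I expect the main obstacle to be exactly this boundary bookkeeping: matching up the \emph{open} conditions (interior/exterior) on the atom $A$ with the \emph{closed} condition (boundary) defining $S$, and confirming that a single point $q\in S$ can simultaneously be a limit point of the right atom and strictly interior to the $U_i$ with $i\in\sigma$ (and strictly exterior to those with $i\in[k]\setminus\tau$). One tidy way to sidestep the subtlety is to induct on $|\tau\setminus\sigma|$: if $\tau=\sigma$, any point of the (open, nonempty) atom of $\sigma$ works; otherwise pick $m\in\tau\setminus\sigma$, apply the inductive hypothesis inside the sphere $\partial U_m$ — which, by fact (ii) again, meets the other $U_i$ in a well-formed independent-like pattern — to get pierceability of $[\sigma,\tau\setminus\{m\}]$ at a point of $\partial U_m$, and check this point has the needed signs with respect to $U_m$ as well. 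Either route reduces the lemma to facts (ii) and (iii) plus elementary point-set topology; I would present whichever keeps the sign-pattern verification shortest.
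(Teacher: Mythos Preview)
Your proposal identifies the right target---a point $q$ with $\rho(q)$ matching the prescribed sign pattern---but neither route you sketch actually closes the argument, and the difficulty is precisely the one you flag.

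In the first approach, fact (iii) applied to the subcollection $\{U_i\mid i\in\tau\setminus\sigma\}$ tells you that every point of $S$ is a common limit point of the atoms \emph{of that subcollection}. Those atoms are indexed by subsets of $\tau\setminus\sigma$; they carry no information about the indices in $\sigma$ or in $[k]\setminus\tau$. The set you call $A$, determined by signs on $\sigma\cup([k]\setminus\tau)$, is not one of these atoms, so fact (iii) does not say $A$ has a limit point on $S$. Your attempted patch via a shrunk $A'$ only yields the closed conditions $q\in\overline{U_i}$ for $i\in\sigma$ and $q\notin U_i$ for $i\in[k]\setminus\tau$, which still permit $\rho(q)_i=0$ on those indices. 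Ruling that out is exactly the ``boundary bookkeeping'' you anticipate, and nothing in the proposal does it.

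The inductive alternative is not well-posed as stated: the inductive hypothesis concerns independent balls in $\R^d$, and ``applying it inside $\partial U_m$'' would require transporting the remaining $U_i$ to a collection of independent balls in $\R^{d-1}$. That is not what fact (ii) provides, and making it precise (e.g.\ via inversive geometry) is real additional work you have not supplied.

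The paper's proof runs the iteration in the opposite direction and thereby avoids the limit-point issue entirely. It starts from a point $p$ with $\rho(p)$ \emph{all zeros}---a point in $\bigcap_{i=1}^k\partial U_i$, which exists since $k\le d$---and then fixes the coordinates one at a time. At step $i$, the point sits on the intersection of the boundaries whose indices are still zero; this intersection is a sphere by fact (ii), and $\partial U_i$ cuts it into two pieces, one inside $U_i$ and one outside $\overline{U_i}$, with $p$ on their common boundary. A small move within the sphere pushes $\rho(p)_i$ to $+$ or $-$ as desired without disturbing the other coordinates. After $k$ steps the required sign pattern is achieved. This perturbation-within-nested-spheres argument is what makes the sign bookkeeping trivial; your approaches try to land on $S$ from outside and then must argue the extraneous boundaries were avoided, which is where they stall.
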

\begin{proof}
Let $p$ be a point in the intersection of the boundaries of all $U_i$, noting that such a point exists because $k\le d$. 
Observe that $\rho(p)$ is the all zeroes vector. 
We wish to find a point whose vector has $+$ for each index in $\sigma$, $0$ for each index in $\tau\setminus\sigma$, and $-$ for all other indices.
To achieve this, for every $i = 1, 2, \ldots, k$, we will replace $p$ by a new point, so that our new point has the correct symbols in the first $i$ indices, and $0$'s in the remaining indices.

Let $i\in [k]$, and assume that $\rho(p)$ has the correct symbols in indices less than $i$, and $0$'s for indices equal to $i$ and larger. Then $p$ lies in the intersection of the boundaries of all $U_j$ with $j > i$, as well as the $U_j$ with $j<i$ and $\rho(p)$ equal to zero in the $j$-th index. 
Let $S$ denote the intersection of these boundaries (setting $S=\R^d$ if there are no boundaries being intersected).
Observe that $\partial U_i\cap S$ is a sphere with codimension one in $S$, and that it contains $p$. 
Thus $\partial U_i$ divides $S$ into two nonempty halves, one contained in $U_i$ and the other contained in the complement of the closure of $U_i$, and $p$ is a limit point of each of these halves. 
By moving $p$ slightly inside $S$, we do not change $\rho(p)$ except possibly at the $i$-th index.
If $i\in\sigma$, we move $p$ so that it lies in the interior of $U_i$, and if $i\in[k]\setminus\tau$ we move $p$ so that it lies in the complement of the closure of $U_i$.
Otherwise, we do not move $p$.
This yields a point whose first $i$ indices have the correct symbols, and by performing this operation for all $i\in[k]$ we obtain a point with the desired $\rho(p)$, proving the result.
\end{proof}

The following lemma allows us to perform a (geometric) $k$-piercing in $\R^d$ so that all previously pierceable intervals remain pierceable, and all new intervals are also pierceable, provided that $k < d$. 
By applying this lemma repeatedly, we will prove Lemma \ref{lem:pierceable}, establishing a slightly stronger version of part (I) of Theorem \ref{thm:realization}. 

\begin{lemma}\label{lem:step}
Let $1 \le k < d$. 
Let $\C\subseteq 2^{[n]}$ be a code, suppose that the neuron $n$ is a $k$-piercing of $\C$ with associated interval $[\sigma, \tau]$, and let $\D = \C\setminus n$.
If $\D$ has a well-formed realization $\{U_1,\ldots, U_{n-1}\}$ by open balls in $\R^{d}$, and $p$ is a point where $[\sigma,\tau]$ is pierceable in this realization, then for any sufficiently small open ball $U_n$ centered at $p$ the following hold:
\begin{itemize}
    \item[(i)] $\{U_1,\ldots, U_n\}$ is a well-formed realization of $\C$ by open balls, 
    \item[(ii)] every interval in $\D$ that was pierceable in the original realization is also pierceable in this new realization, and
    \item[(iii)]  every interval in $\C$ whose top element contains $n$ is pierceable in this new realization. 
\end{itemize}
\end{lemma}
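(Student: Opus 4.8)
The plan is to place $U_n$ as a small ball centered at the piercing point $p$ and verify the three claims by carefully tracking what happens near $p$ versus what happens away from $p$. First I would fix some background data: since $[\sigma,\tau]$ is pierceable at $p$, the vector $\rho(p)$ has $+$'s on $\sigma$, $0$'s on $\tau\setminus\sigma$, and $-$'s elsewhere; in particular $p$ lies in the interior of each $U_i$ with $i\in\sigma$, on the boundary of each $U_i$ with $i\in\tau\setminus\sigma$, and in the complement of the closure of each remaining $U_i$. Because there are only finitely many balls, I can choose $\varepsilon>0$ small enough that the closed $\varepsilon$-ball around $p$ is contained in the interior of $U_i$ for $i\in\sigma$, is disjoint from the closure of $U_i$ for $i\notin\tau$, and meets only the boundaries $\partial U_i$ with $i\in\tau\setminus\sigma$ (and no triple or higher-order intersections of boundaries that do not already pass through $p$). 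Let $U_n$ be the open ball of this radius $\varepsilon$ centered at $p$.

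For claim (i), well-formedness: the boundaries $\partial U_i$ with $i\in\tau\setminus\sigma$, together with $\partial U_n$, restricted to a neighborhood of $p$, look exactly like $k$ independent balls' boundaries plus one more in $\R^d$. Here I would invoke the facts about independent balls from the excerpt — item (ii) says the $k$ balls $\{U_i : i\in\tau\setminus\sigma\}$ have boundaries meeting in a $(d-k)$-sphere $S$ through $p$ (this $S$ is exactly what pierceability at $p$ encodes via item (iii)), and since $k<d$ we have $\dim S\ge 1$, so a sufficiently small generic sphere $\partial U_n$ centered at $p\in S$ meets $S$ transversally in a $(d-k-1)$-sphere and meets each sub-intersection of the $\partial U_i$ in a sphere of the expected dimension. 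Shrinking $\varepsilon$ further ensures $\partial U_n$ avoids all boundaries $\partial U_i$ with $i\notin\tau\setminus\sigma$, so no intersection involving $U_n$ can have more than $k+1\le d$ participating spheres with nonempty intersection, and all such intersections have the correct dimension; the intersections not involving $U_n$ are unchanged and well-formed by hypothesis. For the code: a point $q$ far from $p$ (outside $\overline{U_n}$) has the same membership data as before, so contributes the same codeword of $\D$, which lies in $\C$ since $\D\subseteq\C$ when we append the $n$-coordinate $0$; a point $q$ in $U_n$ has $i\in\sigma$ implies $q\in U_i$, $i\notin\tau$ implies $q\notin U_i$, and the remaining coordinates range freely by Lemma~\ref{lem:independent} applied to $\{U_i:i\in\tau\setminus\sigma\}$ intersected with $U_n$, yielding exactly the codewords of $[\sigma\cup\{n\},\tau\cup\{n\}]$. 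Since $n$ is a $k$-piercing of $\C$, these are precisely the codewords of $\C$ containing $n$, so $\code(\{U_1,\ldots,U_n\}) = \C$.

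For claim (ii): if $[\alpha,\beta]\subseteq\D$ was pierceable at some point $p'$, I claim it is still pierceable — either at $p'$ itself if $p'\notin\overline{U_n}$, or at a slightly perturbed nearby point if $p'\in\overline{U_n}$. In the first case nothing near $p'$ has changed. In the second case, $p'$ lies near $p$, so $\rho(p')$ restricted to $\sigma$ is all $+$ and restricted to $[n]\setminus\tau$ is all $-$; the coordinates of $p'$ in $\tau\setminus\sigma$ can be anything, and the new $n$-coordinate of $p'$ is $+$, $0$, or $-$ according to whether $p'$ is inside, on, or outside $\partial U_n$. In all cases I can nudge $p'$ off of $\partial U_n$ (staying in the same relative position with respect to all other boundaries, which is possible since $\dim S\ge 1$ gives room) to make the $n$-coordinate nonzero without changing the others, so the set of codewords appearing near the nudged point is exactly $[\alpha,\beta]$ with the $n$-coordinate appended as $0$, i.e.\ still $[\alpha,\beta]$ viewed in $\C$. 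Claim (iii) is essentially the content of Lemma~\ref{lem:independent} combined with the argument above: any interval of $\C$ whose top element contains $n$ must, by the structure of the $k$-piercing, be of the form $[\alpha\cup\{n\},\beta\cup\{n\}]$ with $\sigma\subseteq\alpha\subseteq\beta\subseteq\tau$; applying Lemma~\ref{lem:independent} to the $k\le d$ independent balls $\{U_i:i\in\tau\setminus\sigma\}$ inside the region $U_n\cap\bigcap_{i\in\sigma}U_i\setminus\bigcup_{i\notin\tau}\overline{U_i}$ produces a point at which exactly this interval is pierceable.

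The main obstacle I anticipate is claim (i)'s well-formedness bookkeeping: one must be careful that adding $\partial U_n$ does not create a bad intersection — e.g.\ that $\partial U_n$ together with the $k$ spheres $\{\partial U_i : i\in\tau\setminus\sigma\}$ really does behave like $k+1$ well-formed spheres and not, say, produce a tangency or a lower-dimensional intersection. This is exactly where $k<d$ is used (so that $S=\bigcap_{i\in\tau\setminus\sigma}\partial U_i$ is positive-dimensional and a small sphere around $p\in S$ genuinely crosses it), and where items (ii) and (iii) about independent balls are needed to identify $S$ with the common limit set of the relevant atoms. The rest is a matter of choosing $\varepsilon$ small enough to isolate $p$ from all the unrelated geometry, which is routine once set up.
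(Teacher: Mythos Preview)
Your overall strategy---center a small ball $U_n$ at $p$ and invoke Lemma~\ref{lem:independent} for item (iii)---is the same as the paper's, but there are two genuine gaps in the execution.

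For (ii), the order of choices is backwards, and the ``nudging'' does not do what you need. You fix $\varepsilon$ first from local conditions at $p$, and then, for a witness $p'\in\overline{U_n}$, propose to ``nudge $p'$ off of $\partial U_n$\ldots\ to make the $n$-coordinate nonzero.'' But making the $n$-coordinate $+$ is useless: that shows $[\alpha\cup\{n\},\beta\cup\{n\}]$ is pierceable, not $[\alpha,\beta]$. You actually need to move $p'$ entirely outside $\overline{U_n}$ while preserving its sign pattern on $[n-1]$, and for a $p'$ well inside $U_n$ there is no reason this is a small perturbation; the open region of $T=\bigcap_{i\in\beta\setminus\alpha}\partial U_i$ on which $[\alpha,\beta]$ is pierceable could, for your already-fixed $\varepsilon$, sit entirely inside $\overline{U_n}$. (Invoking $\dim S\ge 1$ does not help here: $S$ is the intersection over all of $\tau\setminus\sigma$, not over $\beta\setminus\alpha$.) The paper avoids this by reversing the quantifiers: first choose, for every previously pierceable interval, a witness point $p'\neq p$ (the only delicate case is $[\sigma,\tau]$ itself, and this is exactly where $\dim S\ge 1$ is used, to find a second pierceable point on $S$ near $p$), and only then shrink $\varepsilon$ so that $\overline{U_n}$ misses all of these finitely many witnesses.

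For (iii), your description of the relevant intervals is incorrect. An interval $[\sigma',\tau']\subseteq\C$ with $n\in\tau'$ need not have $n\in\sigma'$; for instance $[\sigma,\tau\cup\{n\}]$ itself has $n$ only in the top. Your argument, which applies Lemma~\ref{lem:independent} to the $k$ balls $\{U_i:i\in\tau\setminus\sigma\}$ ``inside $U_n$,'' forces the pierceable point to lie in the interior of $U_n$ and hence only handles intervals with $n\in\sigma'$. The paper instead applies Lemma~\ref{lem:independent} to the $k+1\le d$ independent balls indexed by $\gamma=(\tau\cup\{n\})\setminus\sigma$; since $n\in\tau'$ forces the pierceable point into $\overline{U_n}$, that point automatically lies inside each $U_i$ with $i\in\sigma$ and outside each $\overline{U_j}$ with $j\notin\tau$, covering both the cases $n\in\sigma'$ and $n\in\tau'\setminus\sigma'$ at once.
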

\begin{proof}
Item (i) is immediate since $[\sigma,\tau]$ is pierceable at $p$. 
The point $p$ must lie on the common intersection of the boundaries of all $U_i$ with $i\in \tau\setminus \sigma$.
This common intersection is a sphere $S$ of dimension at least one since $k<d$. 
The point $p$ lies in the interior of all $U_j$ with $j\in\sigma$, and in the complement of the closure of all $U_j$ with $j\in [n]\setminus \tau$. 
The same is true for all points in a small neighborhood of $p$, and hence $S$ contains an infinite set of points at which $[\sigma,\tau]$ is pierceable. 
For every interval $[\sigma',\tau']$ in $\D$ that was pierceable in our original realization (including the interval $[\sigma,\tau]$ itself), choose a point $p'\neq p$ at which this interval was pierceable.
Choosing $U_n$ small enough that its closure avoids all such $p'$ guarantees item (ii).

For item (iii), define $\gamma = (\tau\cup\{n\})\setminus \sigma$ and observe that the $U_i$ with $i$ in $\gamma$ comprise a collection of $k+1\le d$ independent balls.
Each interval $[\sigma',\tau']$ in $\C$ with $n\in\tau'$ must be a subset of $[\sigma, \tau\cup \{n\}]$. 
Up to removing the indices in $\sigma$, such an interval corresponds to an interval in $2^\gamma$, which is pierceable by Lemma \ref{lem:independent}. 
Since $n\in \tau$, the point where this interval is pierceable lies in the closure of $U_n$.
In particular, this point lies in the interior of all $U_i$ with $i\in \sigma$, and in the complement of the closure of all $U_j$ with $j\in [n-1]\setminus \tau$. 
Hence the interval $[\sigma',\tau']$ is pierceable at this point in our new realization, proving the result. 
\end{proof}

\begin{lemma}\label{lem:pierceable}
Let $\C\subseteq 2^{[n]}$ be a $k$-inductively pierced code. 
Then $\C$ has a well-formed realization by open balls in $\R^{k+1}$, with the following property: every interval of rank at most $k$ in $\C$ is pierceable at some point.
\end{lemma}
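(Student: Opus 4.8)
The plan is to induct on $n$, using Lemma~\ref{lem:step} to take the inductive step. The base case is a code on one neuron (or the empty code $\{\emptyset\}$), which satisfies $\CF(J_\C) = \emptyset$ and is trivially realized by a single ball in $\R^{k+1}$ (taking $k+1 \ge 1$); the only interval of rank $\le k$ is pierceable since $k \ge 0$ means rank-$0$ intervals, i.e.\ single codewords, are pierceable at any point of the relevant atom. For the inductive step, let $\C \subseteq 2^{[n]}$ be $k$-inductively pierced; without loss of generality $n$ is a $k'$-piercing of $\C$ for some $k' \le k$, with associated interval $[\sigma,\tau]$, and $\D = \C\setminus n$ is $k$-inductively pierced. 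By the inductive hypothesis, $\D$ has a well-formed realization $\{U_1,\ldots,U_{n-1}\}$ by open balls in $\R^{k+1}$ in which every interval of rank at most $k$ in $\D$ is pierceable.

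The key point is to apply Lemma~\ref{lem:step} with $d = k+1$. Since $k' \le k < k+1 = d$, the hypothesis $1 \le k' < d$ of Lemma~\ref{lem:step} is satisfied (when $k' \ge 1$; the case $k' = 0$ needs separate but easy handling—a $0$-piercing adds a ball $U_n$ disjoint from or nested appropriately relative to the others, near a point where the relevant single codeword's atom is, and this is even easier since no independence is required). The interval $[\sigma,\tau]$ associated to the piercing has rank $k' \le k$, so by the inductive hypothesis it is pierceable at some point $p$ in the realization of $\D$. Lemma~\ref{lem:step} then produces a sufficiently small ball $U_n$ centered at $p$ so that $\{U_1,\ldots,U_n\}$ is a well-formed realization of $\C$, every interval of $\D$ that was pierceable remains pierceable, and every interval of $\C$ whose top element contains $n$ is pierceable.

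It remains to check that \emph{every} interval of rank at most $k$ in $\C$ is pierceable in the new realization, and here I would split into two cases according to whether the interval's top element contains $n$. If it does, item (iii) of Lemma~\ref{lem:step} gives pierceability directly. If it does not, then the interval lies entirely in $\D = \C\setminus n$ (an interval $[\gamma,\delta]$ with $n \notin \delta$ consists of codewords not containing $n$, each of which is a codeword of $\D$, and conversely), it has rank at most $k$, so by the inductive hypothesis it was pierceable in the realization of $\D$, and item (ii) of Lemma~\ref{lem:step} preserves this. The main obstacle—already absorbed into Lemma~\ref{lem:step}—is ensuring the new ball $U_n$ is small enough to avoid the chosen piercing points of all previously pierceable intervals while still producing the $k'+1$ independent balls needed to pierce the new intervals; since Lemma~\ref{lem:step} is available, the induction goes through cleanly. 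One should also note that $k$ here is any value for which $\C$ is $k$-inductively pierced, not necessarily the smallest, so the statement is uniform over such $k$ and the induction respects this.
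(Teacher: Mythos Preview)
Your proposal is correct and follows essentially the same route as the paper: induct on $n$, apply Lemma~\ref{lem:step} with $d=k+1$ at the piercing step, and observe that items (ii) and (iii) of that lemma cover all intervals of rank at most $k$ in $\C$ according to whether the top element contains $n$. Your treatment is in fact slightly more careful than the paper's, since you flag that Lemma~\ref{lem:step} is stated for $1\le k'$ and so the $0$-piercing case technically needs a separate (trivial) argument; the paper simply applies Lemma~\ref{lem:step} without comment there.
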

\begin{proof}
We may assume that the natural linear order on $n$ is a piercing order, and proceed by induction on $n$, applying Lemma \ref{lem:step} at each step. 
The base case $n=1$ is immediate by choosing any open ball $U_1$ in $\R^{k+1}$. 
For the inductive step, one must only note that the intervals described in items (ii) and (iii) account for all intervals of rank at most $k$ in $\C$.
Indeed, when performing a $k$-piercing, the only new intervals of rank at most $k$ that are introduced to the code are those of the form $[\sigma', \tau'\cup \{n\}]$, which are covered by case (iii), while case (ii) covers all previously introduced intervals of rank at most $k$. 
\end{proof}

We now turn our attention to the task of performing geometric $k$-piercings in $\R^k$.
Since the intersection of $k$-many independent spheres in $\R^k$ is a 0-dimensional sphere consisting of only two points, not every $k$-piercing can be realized geometrically in $\R^k$.
However, when a code is splittable (recall Definition \ref{def:splittable}), certain $k$-piercings can be achieved.
We say that an interval $[\sigma,\tau]$ in a splittable code $\C$ is \emph{accessible} if it has rank less than $k$, or if it has rank exactly $k$ and the following holds: the intersection of $\sigma$ with the attaching set $A\sqcup B$ of $\tau\setminus \sigma$ is equal to $A$ or $B$.

\begin{lemma}\label{lem:splittable-realization}
Let $k\ge 1$, let $\C\subseteq 2^{[n]}$ be an inductively $k$-pierced code, and suppose that $\C$ is splittable. 
Then $\C$ has a well-formed realization by open balls in $\R^k$, with the following property: every accessible interval in $\C$ is pierceable at some point.
\end{lemma}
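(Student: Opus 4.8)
The plan is to proceed by induction on $n$, mirroring the proof of Lemma~\ref{lem:pierceable} but using a $k$-piercing step adapted to $\R^k$. As the base case, a single open ball in $\R^k$ realizes the unique one-neuron code, and every interval there has rank at most $0 < k$ (assuming $k\ge 1$), so the accessibility condition is vacuous. For the inductive step, assume the natural linear order is a piercing order and let $n$ be a $k'$-piercing of $\C$ with associated interval $[\sigma,\tau]$, where $k' = |\tau\setminus\sigma| \le k$. Set $\D = \C\setminus n$. By Proposition~\ref{prop:greedy} and Theorem~\ref{thm:characterization}, $\D$ is inductively $k$-pierced; one must check $\D$ is still splittable (the attaching sets and the relevant restrictions of $P(\D)$ are inherited from $\C$ via~(\ref{eq:CF-pierced}), since deleting $n$ only removes the degree-two pseudo-monomials involving $x_n$, which do not affect cliques of size $k$ among $[n-1]$ or the order $P$ on $[n-1]$). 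So by induction $\D$ has a well-formed realization $\{U_1,\ldots,U_{n-1}\}$ in $\R^k$ in which every accessible interval of $\D$ is pierceable.

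The heart of the argument is placing $U_n$. If $k' < k$, then $[\sigma,\tau]$ has rank $k' < k$, hence is accessible in $\D$, so it is pierceable at some point $p$; moreover the common boundary of the $U_i$ for $i\in\tau\setminus\sigma$ is a sphere of dimension $k-k'\ge 1$, so Lemma~\ref{lem:step} (with $d = k$ and the piercing rank $k' < k = d$) applies verbatim to give a small $U_n$ at $p$ satisfying its conclusions (i)--(iii). The delicate case is $k' = k$. Here the relevant common boundary sphere is $0$-dimensional, just two points $\{p_A, p_B\}$, so Lemma~\ref{lem:step} does not apply. Instead I would argue directly: let $A\sqcup B$ be the partition of the attaching set of $\tau\setminus\sigma$ witnessing splittability. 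By the inductive hypothesis applied to a suitable \emph{accessible} interval of $\D$ — namely $[\sigma\cup A,\tau]$ (or symmetrically $[\sigma\cup B,\tau]$), which has rank $k$ and whose intersection with the attaching set of $\tau\setminus\sigma$ equals $A$ — that interval is pierceable at a point $p_A$ lying on $\partial U_i$ for all $i\in\tau\setminus\sigma$, inside $U_i$ for $i\in\sigma\cup A$, and outside $\overline{U_i}$ for the remaining neurons; and likewise $[\sigma\cup B,\tau]$ is pierceable at $p_B$. These are exactly the two points of the $0$-sphere, so by fact~(iv) about independent balls, choosing $U_n$ to be a small ball whose closure contains $p_A$ but not $p_B$ makes $\{U_i : i \in (\tau\cup\{n\})\setminus\sigma\}$ independent, giving item~(i) of the realization.

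To finish, I must re-establish the pierceability bookkeeping. For intervals of rank $< k$ this is the same argument as in Lemma~\ref{lem:step}(ii): there are infinitely many points near $p_A$ witnessing pierceability of each previously accessible interval of $\D$ (using that small perturbations preserve $\rho$ away from the newly relevant boundary, and fact~(iii) for the spheres involved), so take $U_n$ small enough that $\overline{U_n}$ avoids a chosen witness for each such interval. For new intervals of $\C$ whose top element contains $n$: these lie inside $[\sigma, \tau\cup\{n\}]$, and after deleting $\sigma$ they correspond to intervals of $2^\gamma$ with $\gamma = (\tau\cup\{n\})\setminus\sigma$, a set of $k+1$ independent balls — but \emph{only the accessible ones need be pierceable}, and I will check that an interval $[\sigma',\tau'\cup\{n\}]$ of rank $k$ is accessible in $\C$ precisely when the corresponding interval of $2^\gamma$ is pierceable via the $\R^k$ geometry: the attaching set of $(\tau'\cup\{n\})\setminus\sigma'$ and the constraint that $\sigma'$ meet it in one block translates, under the identification, into choosing the pierce point on the correct side, which Lemma~\ref{lem:independent} together with the chosen placement of $U_n$ supplies. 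The previously accessible intervals of $\D$ that still have rank $<k$, and those of rank $k$, are covered by the perturbation argument above, and one checks their accessibility is unaffected by adding $n$ (the attaching sets of $k$-cliques within $[n-1]$ can only grow by possibly adding $n$, and the partition extends by placing $n$ into whichever of $A,B$ is consistent with $P(\C)$ — here is where splittability of $\C$, not just $\D$, is used).

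The main obstacle is precisely this last compatibility check: verifying that the combinatorial notion of \emph{accessible} interval matches exactly the set of intervals one can geometrically pierce in $\R^k$ given the side-constraints forced by $0$-dimensional sphere intersections, and that the partition data of $\C$ restricts correctly to $\D$ and extends correctly when $n$ is added. Getting the definitions of attaching set, splittability, and accessibility to line up — in particular handling the case where $\sigma'$ already contains some attaching-set elements and $n$ must be slotted into $A$ or $B$ — is the delicate combinatorial core; the geometry itself is then a routine application of facts~(i)--(iv) and Lemmas~\ref{lem:independent} and~\ref{lem:step}.
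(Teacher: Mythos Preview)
Your inductive skeleton matches the paper's: induct on $n$, invoke Lemma~\ref{lem:step} when $k'<k$, and in the $k'=k$ case pierce at a point supplied by accessibility in $\D$. The gap is in how you locate that point. The intervals you call $[\sigma\cup A,\tau]$ and $[\sigma\cup B,\tau]$ are not what you think. Since $n$ is minimal in $P(\C)$ and lies in (say) $A$, the total order on $A$ forces $n<a$ for every other $a\in A$, hence $A\setminus\{n\}\subseteq\sigma$; so in $\D$ your ``$[\sigma\cup A,\tau]$'' is simply $[\sigma,\tau]$. Worse, each $b\in B$ is incomparable to $n$, so $b\notin\sigma$, and $b$ lies outside the clique $\tau\setminus\sigma$ by definition of the attaching set, so $b\notin\tau$; thus $[\sigma\cup B,\tau]$ is not an interval at all unless $B=\emptyset$. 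Your plan to exhibit \emph{both} $0$-sphere points as pierce points of two accessible intervals therefore collapses, and with it the appeal to fact~(iv) in the form you state.

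The paper's route sidesteps this. It shows directly that $[\sigma,\tau]$ is accessible in $\D$: the attaching set of $\tau\setminus\sigma$ in $\D$ is $(A\setminus\{n\})\sqcup B$, and $\sigma$ meets it exactly in $A\setminus\{n\}$. By induction $[\sigma,\tau]$ is pierceable at some $p$; center a small $U_n$ there. Every previously pierceable interval of $\D$ survives except possibly $[\sigma,\tau]$ itself, and new intervals with $n$ in the top are handled via Lemma~\ref{lem:independent}. The sole remaining case is whether $[\sigma,\tau]$ is accessible in $\C$; one checks this happens only when $B=\emptyset$, and then the \emph{other} point $q$ of the $0$-sphere $\bigcap_{i\in\tau\setminus\sigma}\partial U_i$ already witnesses pierceability of $[\sigma,\tau]$ in the realization of $\D$, and $U_n$ misses $q$ by well-formedness. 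You never need to identify what interval is pierceable at $q$ in general---only that $\overline{U_n}$ avoids it, which is automatic for $U_n$ small.
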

\begin{proof}
By relabeling, we can reduce to the case where the natural linear order on $[n]$ is a piercing order. 
Similarly to the proof of Lemma \ref{lem:pierceable}, we will proceed by induction on $n$.
For the base case $n=1$, any open ball $U_1$ in $\R^k$ has every interval pierceable, and the result follows. For $n\ge 2$, define $\D \eqdef \C\setminus n$, let $[\sigma, \tau]$ be the interval associated to the $k$-piercing of $\C$ by $n$, and let $\{U_1,\ldots, U_{n-1}\}$ be a realization of $\D$ satisfying the conclusion of the lemma. 
If $[\sigma, \tau]$ has rank less than $k$ (i.e. $n$ is a $k'$-piercing of $\C$ with $k'<k$) then Lemma \ref{lem:step} yields a realization of $\C$ with the desired properties.

This leaves the case that $[\sigma, \tau]$ has rank exactly $k$. 
In this case, let $A\sqcup B$ be the attaching set of $\tau\setminus \sigma$ in $\C$, and assume without loss of generality that $n\in A$. 
Since $\C$ is splittable and $n$ is minimal in $P(\C)$, $n$ is less than every $a\in A$ and is not comparable to any $b\in B$.
It follows that $\sigma$ contains $A$ and no elements of $B$. 
Noting that the attaching set of $\tau\setminus \sigma$ in $\D$ is $A\setminus \{n\} \sqcup B$, we see that the interval $[\sigma,\tau]$ is accessible in $\D$, and by inductive hypothesis there exists a point $p$ where this interval is pierceable in the realization $\{U_1,\ldots, U_{n-1}\}$. 

Consider a small open ball $U_n$ centered at $p$. 
By making $U_n$ small enough, we can guarantee that $\{U_1,\ldots, U_n\}$ is a well-formed realization of $\C$ by open balls, and that every interval in $\D$ that was pierceable remains pierceable, except possibly the interval $[\sigma,\tau]$ itself. 
Moreover, Lemma \ref{lem:independent} guarantees that any new intervals $[\sigma', \tau']$ of rank $k$ or less with $n\in \tau'$ are pierceable in our new realization as well. 
The only remaining interval in $\C$ which could be accessible is $[\sigma, \tau]$. 
Observe that this interval is accessible in $\C$ if and only if $B = \emptyset$. 
In this case, the 0-dimensional sphere which is the intersection of the boundaries of all $U_i$ indexed by $\tau\setminus \sigma$ will consist of $p$ and a point $q$, and the interval $[\sigma, \tau]$ will be pierceable at both these points in the realization of $\D$.
The set $U_n$ must avoid the point $q$, lest the spheres indexed by $(\tau\setminus\sigma)\cup \{n\}$ not form an independent collection, and so $[\sigma,\tau]$ remains pierceable at $q$ in our constructed realization of $\C$.
This proves the result. 
\end{proof}

The above lemmas give the required constructive results for Theorem \ref{thm:realization}, and it remains to establish the theorem's restrictions on well-formed realizations of inductively pierced codes---namely that if $\C$ has a well-formed realization by open balls in $\R^k$ then $\C$ must be splittable, and that realizations in $\R^{k-1}$ are not possible. 

\begin{lemma}\label{lem:splittable}
Let $\C\subseteq 2^{[n]}$ be an inductively pierced code, suppose that the largest clique in $G(\C)$ has size $k+1$, and suppose that $\C$ has a well-formed realization by open balls in $\R^k$.
Then $\C$ is splittable. 
\end{lemma}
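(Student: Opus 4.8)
The plan is to fix a $k$-element clique $\sigma$ in $G(\C)$ and extract the required partition of its attaching set from the geometry of a well-formed realization $\U = \{U_1,\ldots,U_n\}$ in $\R^k$. Since $G(\C)$ is a clique on $\sigma$ and $|\sigma|=k$, the balls $\{U_i \mid i\in\sigma\}$ intersect pairwise without containment, and one expects them to form an independent collection in $\R^k$; I would first verify this, using Proposition \ref{prop:elimination-neuron} together with the fact (item (i) at the start of Section \ref{sec:realization}) that $k$ is the largest possible independence number in $\R^k$, and the combinatorial fact that $\sigma$ being a clique of maximum size forces the corresponding atoms to realize the full Boolean lattice on $\sigma$ (any missing codeword would give a degree-two element of $\CF(J_\C)$ on two vertices of $\sigma$, contradicting that $\sigma$ is a clique). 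Once $\{U_i\mid i\in\sigma\}$ is known to be independent, item (ii) of the facts list says their boundaries meet in a $0$-dimensional sphere $S = \{p,q\}$, exactly two points.

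Next I would locate the attaching set $\mathrm{Att}(\sigma)$ of $\sigma$ relative to these two points. For $i\in\mathrm{Att}(\sigma)$, the clique $\sigma\cup\{i\}$ has size $k+1$, which is maximum, so by the same argument the balls $\{U_j \mid j\in\sigma\cup\{i\}\}$ form an independent collection of $k+1$ balls in $\R^k$; by item (iv) of the facts list, this forces the closure of $U_i$ to contain exactly one of $p$ or $q$. This gives a function $\mathrm{Att}(\sigma)\to\{p,q\}$, and I would let $A$ be the preimage of $p$ and $B$ the preimage of $q$; this is the candidate partition. It remains to show that $P(\C)$ restricted to $\mathrm{Att}(\sigma)$ is a disjoint union of a total order on $A$ and a total order on $B$. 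For two elements $i,i'$ of $\mathrm{Att}(\sigma)$ assigned to the same point, say $p$: I claim $U_i$ and $U_{i'}$ are comparable (one contains the other near $p$), which translates to comparability in $P(\C)$ via the dictionary $x_i(1-x_j)\in\CF(J_\C) \iff U_i\subseteq U_j$ in every realization. The point is that both $\partial U_i$ and $\partial U_{i'}$ pass through $p$ but not through $q$, and both are orthogonal-ish to the sphere $S$; locally near $p$ the region $S$ is being cut, and since $i,i'$ are each non-adjacent to no vertex of $\sigma$ yet... here the argument needs the independence of $\{U_j\mid j\in\sigma\}$ again: near $p$ the atoms of all $2^k$ subsets of $\sigma$ accumulate, and the way $\partial U_i$ slices through this picture is determined by which side of $\partial U_i$ contains $q$; two spheres through $p$ avoiding $q$ and each bounding a ball must be locally nested near $p$ (their interiors both accumulate at $p$ from the side away from $q$), giving $U_i\subseteq U_{i'}$ or vice versa, hence comparability in $P(\C)$. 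Conversely, $i\in A$ and $i'\in B$ cannot be comparable in $P(\C)$: if $U_i\subseteq U_{i'}$ then $q\in\overline{U_{i'}}$ would force... actually one uses that a containment $U_i\subseteq U_{i'}$ together with $p\in\partial U_i$ forces $p\in\overline{U_{i'}}$, so $p\in\{p,q\}\cap\overline{U_{i'}}$, but $i'\in B$ means $\overline{U_{i'}}$ contains $q$ not $p$ --- contradiction. So cross-comparabilities are impossible, and within each part comparability holds for every pair, making each part a total order.

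The main obstacle I anticipate is the local nesting claim: showing that two balls whose boundaries both pass through $p\in S$ but avoid $q$ must be locally nested near $p$, and doing so rigorously using only the stated facts about independent balls (items (i)--(iv)) plus well-formedness. I would try to reduce this to a statement purely about which of the $3^{|\sigma|}$ sign-vectors $\rho$ occur near $p$, invoke item (iii) (that $S$ is the common limit set of all atoms of the independent subfamily $\{U_j\mid j\in\sigma\}$), and argue that if $U_i$ and $U_{i'}$ were not locally nested then some atom would fail to accumulate at $p$, or one could build an independent collection of more than $k$ balls in $\R^k$ by combining $\sigma$ with both $i$ and $i'$ --- contradicting item (i). A clean way to package this: the assignment $i\mapsto$ (the point of $S$ in $\overline{U_i}$) is exactly the obstruction to $\{U_j\mid j\in\sigma\cup\{i,i'\}\}$ being independent, and independence of that $(k+2)$-family is already forbidden by item (i), so $i$ and $i'$ in the same part cannot be "independent over $\sigma$," which is precisely the failure of genericity that forces nesting. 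I would also double-check that the partition $A\sqcup B$ does not depend on an arbitrary labeling of $\{p,q\}$ (it does not --- swapping $p$ and $q$ swaps $A$ and $B$, which is allowed by Definition \ref{def:splittable}).
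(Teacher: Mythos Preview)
Your overall architecture matches the paper's proof exactly: fix a $k$-clique $\sigma$, use well-formedness (or independence of $\{U_i\mid i\in\sigma\}$) to get the two-point intersection $S=\{p,q\}$, assign each element of the attaching set to $p$ or $q$ via item (iv), and check the two conditions on $P(\C)$. Your cross-part incomparability argument is also the same as the paper's.

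The gap is in the same-part argument. You aim to show that if $i,i'\in A$ then ``one contains the other near $p$,'' and then claim this ``translates to comparability in $P(\C)$.'' But local nesting near $p$ is not the same as global containment $U_i\subseteq U_{i'}$, and it is only the latter that is equivalent to $x_i(1-x_{i'})\in J_\C$. Two open balls can both have $p$ on their boundary, both avoid $q$, and still cross each other, so your local claim does not deliver the needed conclusion. Your fallback idea (non-independence of $\{U_j\mid j\in\sigma\cup\{i,i'\}\}$) is pointing in the right direction, but as written it stops at ``some atom is missing'' without identifying which relation on $i,i'$ this forces.

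The paper short-circuits all of this with a one-line combinatorial observation that you do not use: the attaching set of $\sigma$ is an \emph{independent set} in $G(\C)$, since an edge between two of its elements would yield a clique of size $k+2$. Hence for any $i,i'$ in the attaching set there is already some degree-two pseudo-monomial on $\{x_i,x_{i'}\}$ in $\CF(J_\C)$; it must be one of $x_ix_{i'}$, $x_i(1-x_{i'})$, $x_{i'}(1-x_i)$. If $i,i'\in A$ then $p\in\overline{U_i}\cap\overline{U_{i'}}$ forces $U_i\cap U_{i'}\neq\emptyset$ in this realization, so $x_ix_{i'}\notin J_\C$, and comparability in $P(\C)$ follows immediately. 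No geometric nesting analysis is needed.
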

\begin{proof}
Let $\sigma$ be the set of vertices of a clique of size $k$ in $G(\C)$.
In any well-formed realization of $\C$ in $\R^k$ by balls, the intersection of the boundaries of the balls indexed by $\sigma$ will be a $0$-dimensional sphere consisting of two points, $p$ and $q$.
The balls indexed by the attaching set of $\sigma$ must each contain exactly one of $p$ or $q$ in their closure. 
Let $A$ be the set of neurons whose associated open balls contain $p$, and let $B$ be the neurons whose associated balls contain $q$. 
Clearly $S = A\sqcup B$. 

It remains to argue that $P(\C)$ is a total order on $A$ and $B$ respectively, and that no element of $A$ is comparable to any element of $B$.
To see that $P(\C)$ is a total order on $A$, first recall that the attaching set of $\sigma$ is an independent set in $G(\C)$. Hence if $i,j\in A$, then $\CF(J_\C)$ contains one of the pseudo-monomials $x_i(1-x_j)$, $x_j(1-x_i)$, or $x_ix_j$. 
Since $U_i$ and $U_j$ share the point $p$, the pseudo-monomial $x_ix_j$ does not occur in $\CF(J_\C)$, and we conclude that $i$ and $j$ are comparable in $P(\C)$.
Since any pair of elements in $A$ are comparable, $P(\C)$ is a total order on $A$.
A symmetric argument establishes that $P(\C)$ is a total order on $B$.
Finally, note that if $i\in A$ and $j\in B$, then $i$ and $j$ are not comparable in $P(\C)$ because $U_i$ contains $p$ while $U_j$ does not, and symmetrically $U_j$ contains $q$ while $U_i$ does not.
\end{proof}

\begin{lemma}\label{lem:clique_lower_bound}
Let $\C\subseteq 2^{[n]}$ be an inductively pierced code, and suppose $G(\C)$ contains a clique of size $k+1$.
Then $\C$ does not have a well-formed realization by open balls in $\R^{k-1}$. 
\end{lemma}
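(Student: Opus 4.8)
The plan is to show that, in any realization of $\C$ by open balls, the balls indexed by a clique of size $k+1$ in $G(\C)$ must form a set-theoretically independent collection, and then to invoke the bound (item (i) above) that an independent collection of open balls in $\R^d$ has at most $d+1$ members. Applied with the independent collection having $k+1$ members, this forces the ambient dimension to be at least $k$, so no realization can live in $\R^{k-1}$.

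Concretely, suppose toward a contradiction that $\U = \{U_1,\ldots,U_n\}$ is a realization of $\C$ by open balls in $\R^{k-1}$, and let $\sigma\subseteq[n]$ be the vertex set of a clique of size $k+1$ in $G(\C)$; set $\rho \eqdef [n]\setminus\sigma$. First I would identify the code realized by the subcollection $\{U_i \mid i\in\sigma\}$. Deleting the balls indexed by $\rho$ from $\U$ yields a realization of the code $\C\setminus\rho$ on the neuron set $\sigma$. Iterating Lemma~\ref{lem:deletion} over the neurons of $\rho$, the canonical form $\CF(J_{\C\setminus\rho})$ consists exactly of those pseudo-monomials in $\CF(J_\C)$ that involve only the variables $x_i$ with $i\in\sigma$. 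Since $\C$ is degree two by Proposition~\ref{prop:CFdegreetwo}, any such pseudo-monomial has exactly two variables, both indexed in $\sigma$; but then the corresponding pair of vertices would fail to be adjacent in $G(\C)$, contradicting the assumption that $\sigma$ is a clique. Hence $\CF(J_{\C\setminus\rho}) = \emptyset$, so every $\gamma\subseteq\sigma$ is a codeword of $\C\setminus\rho$, i.e.\ $\C\setminus\rho = 2^{\sigma}$.

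It follows that $\{U_i \mid i\in\sigma\}$ is a set-theoretically independent collection of $k+1$ open balls in $\R^{k-1}$, so by item (i) we get $k+1 \le (k-1)+1 = k$, a contradiction. I expect the only substantive point to be the identification $\C\setminus\rho = 2^{\sigma}$, and even that is immediate once Lemma~\ref{lem:deletion} and the degree-two property are combined with the definition of $G(\C)$; there is no real obstacle. (Note that this argument in fact rules out \emph{any} realization of $\C$ by open balls in $\R^{k-1}$, well-formed or not, which is slightly stronger than what is asserted.)
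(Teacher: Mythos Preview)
Your argument is correct and follows the same overall strategy as the paper: show that restricting $\C$ to the clique $\sigma$ yields the full power set $2^{\sigma}$, so the corresponding balls are set-theoretically independent, and then invoke the R\'enyi--R\'enyi--Sur\'anyi bound. The difference lies in how you establish $\C\setminus\rho = 2^{\sigma}$. The paper argues combinatorially from a piercing order, observing that as the neurons of $\sigma$ are added the associated intervals (restricted to $\sigma$) build up all of $2^{\sigma}$. You instead argue algebraically: iterate Lemma~\ref{lem:deletion} to see that $\CF(J_{\C\setminus\rho})$ consists of the degree-two pseudo-monomials in $\CF(J_\C)$ supported on $\sigma$, and then note that the clique hypothesis forces this set to be empty. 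Your route is a bit cleaner and in fact uses the inductively-pierced hypothesis only through Proposition~\ref{prop:CFdegreetwo}, so it applies to any degree-two code; and as you observe, neither argument needs the well-formedness assumption, so the conclusion holds for arbitrary realizations by open balls.
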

\begin{proof}
By relabeling we may assume that $G(\C)$ has a clique on $[k+1]$.
The fact that $\C$ is inductively pierced implies that the restriction of $\C$ to $[k+1]$ is the full power set $2^{[k+1]}$.
Indeed, up to ignoring the indices outside of $[k+1]$, any piercing order successively builds sub-intervals of $2^{[k+1]}$ with ranks $0,1,\ldots, k+1$, each of which are contained in $\C$. 
We conclude that if $\C$ had a well-formed realization by open balls in $\R^{k-1}$, then the balls indexed by $[k+1]$ would form an independent collection.
But the largest collection of independent balls in $\R^{k-1}$ has size $k$.
This proves the result.
\end{proof}

\realizationthm*
\begin{proof}
Lemma \ref{lem:pierceable} established a realization of the desired form for (I).
Lemmas \ref{lem:splittable-realization} and \ref{lem:splittable} established the reverse and forward implications needed for (II) respectively.
Finally, Lemma \ref{lem:clique_lower_bound} established (III).
\end{proof}

\section{Computing Piercing Orders and Concluding Remarks}\label{sec:conclusion}

Motivated by the work of Gross, Obatake\ox and Youngs \cite{visualization}, we provide a complete characterization of inductively pierced codes. Additionally, we show that inductively pierced codes can be realized with open balls, and we exactly characterize the smallest dimension in which such realizations exist.
A consequence of our characterization is that an inductively pierced code $\C\subseteq 2^{[n]}$ can be realized in dimension $n-1$ or less.
This strongly contrasts the general situation for convex codes: Jeffs \cite{embedding_phenomena} showed that there are codes $\C\subseteq 2^{[n]}$ which can only be realized in dimensions that are \emph{exponential} in terms of $n$. 

Within our results and their proofs, we have developed an implicit algorithm for a) recognizing if a code is inductively pierced, b) computing a piercing order, c) determining the minimal dimension in which the code may be realized with open balls,  and d) producing such a realization.  
We summarize this algorithm below.
Recall that the codes we work with satisfy the mild requirements (1)--(3) outlined in the introduction.\\

\noindent {\bf Algorithm for computing a piercing order.}  
\begin{enumerate}
\item Compute $\CF(J_\C)$.\\
\emph{Note:} Current algorithms for this are not necessarily efficient---see for example the discussion by Garcia, Puente, Kruse, Liu, Miyata, Petersen, Phillipson\ox and Shiu \cite[Page 1]{grobner}. 
We conjecture below that $\CF(J_\C)$ can be computed efficiently when $\C$ is inductively pierced, even if we do not know this a priori. 
 \item Verify that $\C$ is degree two. If not, then  $\C$ is not inductively pierced. 
\item Form $G(\C)$ and $P(\C)$ from $\CF(J_\C)$.
\item Search for an elimination neuron $i\in [n]$ (recall Definition \ref{def:elimination-neuron}). 
This can be done in $O(n^2)$ time by computing the neighborhood of each neuron in $G(\C)$ and checking whether it is simplicial, and then comparing all simplicial vertices according to $P(\C)$ to find a minimal one. 
If no elimination neuron exists, then $\C$ is not inductively pierced. 
\item Perform a permutation of $[n]$ so that $n$ is our elimination neuron.
Record this permutation and the clique consisting of $n$ and its neighbors. 
\item Delete $n$ from $G(\C)$ and $P(\C)$ to obtain $G(\C\setminus n)$ and $P(\C\setminus n)$ respectively. 
Repeat from Step 4 until no neurons remain, tracking the permutations from each time that we perform in Step 5. 
\item Compose the recorded permutations to obtain a piercing order for $\C$.
The smallest value of $k$ for which $\C$ is $k$-inductively pierced is size of the largest clique recorded in an instance of Step 5, minus one. 
\item Determine if $\C$ is splittable.
Each clique of size $k$ will be contained in one of the $(k+1)$-cliques recorded in Step 5, so there are at most $n(k+1)$ such cliques to check. 
For each, we can compute its attaching set and check whether it is possible to partition it as in Definition \ref{def:splittable}.
\item If $\C$ is splittable, we form a realization in $\R^k$. Otherwise, we form a realization in $\R^{k+1}$.
To realize $\C$ by open balls, sequentially add open balls according to the piercing order.
Lemmas \ref{lem:step}, \ref{lem:pierceable}\ox and \ref{lem:splittable-realization} determine the points on the boundaries of these balls where subsequent balls should be added to form the appropriate code.
\end{enumerate}

\begin{conjecture}
Fix $d\ge 2$.
Given a code $\C\subseteq 2^{[n]}$, there is a polynomial-time algorithm to determine whether or not every pseudo-monomial in $\CF(J_\C)$ has degree at most $d$. 
Here, the input to the algorithm is a list of the codewords in $\C$, with each codeword given as a binary vector of length $n$.
\end{conjecture}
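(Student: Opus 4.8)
The plan is to reduce the question to a bounded combinatorial search, so that the canonical form never has to be computed in full.

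\medskip
\noindent\emph{A reformulation.} Given $c\in\{0,1\}^n$, call $c$ \emph{$d$-locally consistent} with $\C$ if for every $T\subseteq[n]$ with $|T|\le d$ there is a codeword agreeing with $c$ on the coordinates in $T$, and let $\C'$ be the code of all such vectors; clearly $\C\subseteq\C'$. I claim that \emph{every element of $\CF(J_\C)$ has degree at most $d$ if and only if $\C=\C'$}. One direction is immediate: if $c\in\C'\setminus\C$ then some $f\in\CF(J_\C)$ has $f(c)\neq 0$, and if $\deg f\le d$ then the variables of $f$ form a set $T$ on which $c$ agrees with some codeword $c_0$, forcing $f(c_0)\neq 0$ and contradicting $f\in J_\C$. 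For the converse, let $M$ be the set of pseudo-monomials that are minimal in $J_\C$ among those of degree at most $d$; these are exactly the degree-$\le d$ elements of $\CF(J_\C)$, and conventions (1)--(3) prevent $M$ from containing both $m x_j$ and $m(1-x_j)$ for a common $m$ (that would put $m\in J_\C$, making $mx_j$ non-minimal). Hence $\langle M\rangle$ is a pseudo-monomial ideal whose minimal pseudo-monomials are precisely $M$ and whose $0/1$-zero set is exactly $\C'$, so $\CF(J_{\C'})=M$. If $\CF(J_\C)$ had an element $f$ of degree $>d$, then $f$ would be minimal in $J_\C\supseteq\langle M\rangle$ yet not a multiple of any element of $M$, so $f\notin\langle M\rangle=J_{\C'}$, whence $J_\C\neq J_{\C'}$ and $\C\neq\C'$. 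Moreover $M$ --- and hence a polynomial-size description of $\C'$ --- is computable in polynomial time for fixed $d$: enumerate the $O(n^d2^d)$ pseudo-monomials of degree at most $d$, decide membership in $J_\C$ by evaluating on the list of codewords, and discard the non-minimal ones; membership of a vector in $\C'$ can then be tested by evaluating the elements of $M$. Thus the conjecture reduces to deciding whether $\C=\C'$, i.e.\ whether some $d$-locally consistent vector lies outside $\C$.

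\medskip
\noindent\emph{The search.} The engine of the algorithm is the following claim: \emph{if $\C\neq\C'$ then some $c\in\C'\setminus\C$ lies at Hamming distance at most $g(d)$ from a codeword}, where $g$ depends only on $d$ and $g(2)=1$. Granting this, one computes $M$ and then, for each codeword $c_0$ and each coordinate set $S$ with $|S|\le g(d)$, tests whether the vector obtained from $c_0$ by flipping the coordinates in $S$ is a non-codeword lying in $\C'$; the algorithm reports ``not all degrees are $\le d$'' exactly when such a pair is found. Correctness is immediate from the claim and the reformulation, and the running time is $|\C|\cdot n^{g(d)}$ times a polynomial in the input size, hence polynomial for fixed $d$.

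\medskip
\noindent\emph{Proof of the claim for $d=2$.} Take $c\in\C'\setminus\C$ at minimum distance $t$ from $\C$, with nearest codeword $c_0$ and difference set $D=c\bigtriangleup c_0$, so $|D|=t$, and suppose $t\ge 2$. For $i\in D$, flipping $c$ at $i$ gives a non-codeword (else $c$ is at distance $1$ from $\C$) lying within distance $t-1$ of $c_0$, hence by minimality of $t$ outside $\C'$; so it violates some forbidden $d$-pattern on a set $T$. Since $c\in\C'$ we must have $i\in T$, and since $c_0$ agrees with the flipped vector at $i$, the set $T\setminus\{i\}$ must meet $D$; pick $f(i)\in D\setminus\{i\}$ in $T$. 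For $d=2$ this forces $T=\{i,f(i)\}$, and comparing the forbidden pattern with the pattern of $c$ on $T$ --- which \emph{is} realized, as $c\in\C'$ --- shows that every codeword agreeing with $c$ at $f(i)$ also agrees with $c$ at $i$. The map $f$ is fixed-point-free on the finite set $D$, so it contains a cycle $i_1\to i_2\to\cdots\to i_m\to i_1$, and around this cycle ``agreement with $c$'' is a constant function on $\C$; reading off two consecutive cycle elements $i_1,i_2$ we conclude that for every codeword $c'$, membership of $i_1$ in $c'$ is equivalent, or anti-equivalent, to membership of $i_2$. The equivalent case puts two distinct neurons in exactly the same codewords, contradicting convention (3); the anti-equivalent case fails at the codeword $\emptyset$, which exists by convention (1). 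Hence $t=1$, proving the claim with $g(2)=1$.

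\medskip
\noindent\emph{The main obstacle} is the case $d\ge 3$ of the claim: there the forbidden pattern on a $d$-set $T\ni i$ constrains coordinate $i$ in terms of as many as $d-1$ other coordinates, so one obtains an implication with a conjunctive hypothesis rather than a clean binary relation on $D$. The plan is to pass instead to a \emph{minimal} forbidden pattern violated by the flipped vector --- equivalently an element of $\CF(J_{\C'})$ --- isolate the unique literal in $x_i$ that it must contain, and show by a similar cyclic argument that these literals force two neurons into equivalent (or anti-equivalent) behavior, again contradicting conventions (1) and (3). Making this precise, and thereby pinning down an explicit $g(d)$, is the crux of the proof.
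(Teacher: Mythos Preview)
This statement is presented in the paper as an open \emph{conjecture}; the paper offers no proof or proof sketch. So there is no ``paper's proof'' to compare against---you are attempting to resolve a problem the authors leave open.

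Your proposal is explicitly incomplete: you carry out the argument only for $d=2$ and concede that for $d\ge 3$ ``making this precise, and thereby pinning down an explicit $g(d)$, is the crux of the proof.'' So what you have is a partial attack, not a proof of the conjecture as stated.

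On the substance of what you do present, two remarks. First, the reformulation step has a genuine gap. You assert that $\langle M\rangle$ has minimal pseudo-monomials exactly $M$ and that $\langle M\rangle=J_{\C'}$, but neither follows from the observation that $M$ contains no complementary pair $mx_j,\,m(1-x_j)$. An antichain of pseudo-monomials with no such pair can still generate an ideal strictly smaller than the neural ideal of its zero set, so that $\CF(J_{\C'})$ properly contains $M$. What your argument actually requires is that any $f\in\CF(J_\C)$ of degree exceeding $d$ fails to vanish at some point of $\C'$---equivalently, that $f$'s pattern on its variable set $T$ can be extended to all of $[n]$ while avoiding every degree-$\le d$ forbidden pattern. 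This is the real content of the ``only if'' direction, and it is not obvious: the witnessing degree-$\le d$ constraint for a non-codeword may involve coordinates outside $T$, and ruling out a full cover of the $[n]\setminus T$ coordinates by such constraints needs an argument you have not supplied.

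Second, your Hamming-distance argument for $d=2$ is clean and correct: the minimality of $t$, the forced inclusion $i\in T$, the extraction of a fixed-point-free map $f\colon D\to D$, and the cycle argument forcing two neurons to be equivalent or anti-equivalent (hence contradicting conventions (1) and (3)) all go through as written. If the reformulation can be patched, this would settle the $d=2$ case of the conjecture, which would already be new. But as it stands the proposal is a promising outline rather than a proof.
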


Note that when $\CF(J_\C)$ consists of pseudo-monomials with degree bounded by a fixed constant $d$, there is  a polynomial-time algorithm for determining $\CF(J_\C)$.
Indeed, the total number of pseudo-monomials in $\mathbb F_2[x_1,\ldots, x_n]$ of degree at most $d$ is $2^d\binom{n}{d} + 2^{d-1}\binom{n}{d-1} + \cdots + 2d + 1 = O(n^d)$, and so we could simply evaluate each of these pseudo-monomials on all codewords to determine which ones lie in $J_\C$, and the compare them pairwise to find the minimal ones, which will comprise $\CF(J_\C)$.
Hence if this conjecture is true, then our results imply that there is a polynomial-time algorithm for determining whether or not a code is inductively pierced, even if we are not initially given $\CF(J_\C)$. 

\paragraph{Concluding Remarks.} 

Our work proves that inductively pierced codes must be realizable using open balls, but there are codes that are not inductively pierced but nevertheless admit a realization by open balls. Indeed, there are such codes that are not degree two, and there are also such codes that are degree two but have $G(\C)$ non-chordal. 
The code $\C = \{12, 13, 23, 1,2,3,\emptyset\}$ is a simple example of the first case, as its canonical form is $\{x_1x_2x_3\}$, but it can be realized in two dimensions using open balls, as in Figure~\ref{fig:not-pierced}. 
For an example of the second case, consider $\mathcal D = \{12, 14, 23, 34, 1,2,3,4,\emptyset\}$, which has canonical form $\{x_1x_3, x_2x_4\}$ and hence its associated graph is a 4-cycle, which is not chordal. However, $\D$ has a realization by open balls in $\R^2$ as shown in Figure~\ref{fig:not-pierced}. 

\begin{figure}[h]
\[
\includegraphics{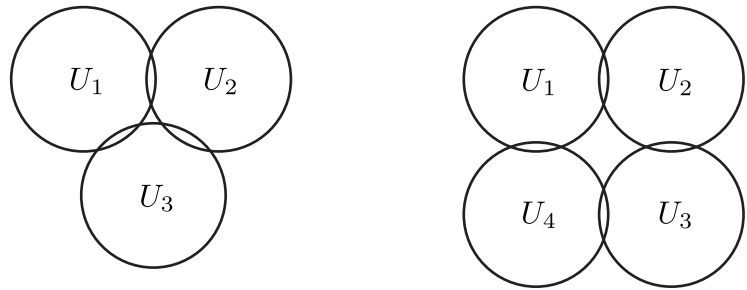}
\]
\caption{Two codes that are not inductively pierced, but can be realized with well-formed collections of open balls in the plane. (Left) The code $\C$ is not degree two. (Right) The graph $G(\D)$ is not chordal. }\label{fig:not-pierced}
\end{figure}

 It seems plausible to seek larger classes of codes that admit realizations that are constructed iteratively from open balls. 
 However, codes such as $\D$ above, whose realizations cover non-contractible subsets of $\R^d$, will pose a challenge to such an approach. Had we arranged $U_1, U_2$\ox and $U_3$ in a vertical line, then there would have been no way to add an appropriate open ball (or any convex set) $U_4$ at the final step.
 This highlights the special structure of inductively pierced codes, whose realizations do not create loops or other topologically interesting features. \\

\begin{figure}[h]
\[
\includegraphics{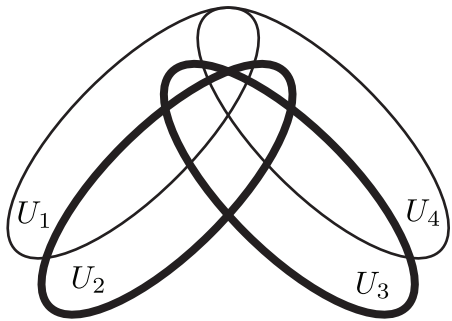}
\]
\caption{An inductively pierced code realized by ellipsoidal regions in the plane. This code cannot be realized by open balls in the plane.}\label{fig:ellipses}
\end{figure}

Finally, note that when realizations are not restricted to open balls, our characterization of realization dimension does not apply, even when a code is inductively pierced.
For example, consider the complete code on four neurons, $\C = 2^{[4]}$. This code is inductively pierced---its canonical form is empty and thus trivially meets the degree two condition, and its graph $G(\C)$ is the complete graph $K_4$ and thus is chordal. 
Theorem \ref{thm:realization} implies that $\C$ is not realizable in $\R^2$ using open balls, but it does have realization in $\R^2$ using ellipsoidal regions, as shown in Figure~\ref{fig:ellipses}. 
The principal difference between this realization and any realization using open balls is that the intersection of two ellipses need not be a lower-dimensional ellipse. In the figure, the sets $U_2$ and $U_3$ ``pass through" one another, and their boundaries share four common points. 
Accounting for such possibilities renders it more difficult to formulate simple inductive constructions for realizations by ellipses. A characterization of minimal dimension for realizations of inductively pierced codes using ellipses (or other types of convex regions) remains an open question.

\bibliographystyle{plain}
\bibliography{inductively_pierced.bib}

\end{document}